\newtheorem{prop}{Proposition}
\newtheorem{thm}{Theorem}
\def\blfootnote{\xdef\@thefnmark{}\@footnotetext}
\def\ps@pprintTitle{%
  \let\@oddhead\@empty
  \let\@evenhead\@empty
  \let\@oddfoot\@empty
  \let\@evenfoot\@oddfoot
}
\title{A bilevel approach for compensation and routing decisions in last-mile delivery\blfootnote{\textit{Email addresses:}\\ \texttt{mcerulli@unisa.it} (Martina Cerulli), \texttt{claudia.archetti@unibs.it} (Claudia Archetti), \texttt{elena.fernandez@uca.es} (Elena Fern\'andez), \texttt{ivana.ljubic@essec.edu} (Ivana Ljubi\'c)}}
\author[1]{Martina Cerulli}
\author[2]{Claudia Archetti}
\author[3]{Elena Fern\'andez}
\author[2]{Ivana Ljubi\'c}
\affil[1]{\small{Department of Computer Science, University of Salerno, 84084 Fisciano, Italy}}
\affil[2]{\small{Department of Information Systems, Decision Sciences and Statistics, ESSEC Business School, 95021 Cergy Pontoise, France}}
\affil[3]{\small{Departament of Statistics and Operational Research, Universidad de C\'adiz, Puerto Real, Spain}}
\date{}
\begin{document}
\maketitle
\vspace*{-1cm}
\hspace*{-6mm}\fcolorbox{red}{white}{\parbox{\textwidth}{This paper has been accepted for publication on \textbf{Transportation Science} (Vol.\ 58, No.\ 5, Pages 1076--1100, 2024) . The \textbf{final published version} is available at \url{https://doi.org/10.1287/trsc.2023.0129}, along with its online appendix.}}
\vspace*{4mm}
\begin{abstract}
In last-mile delivery logistics, peer-to-peer logistic platforms play an important role in connecting senders, customers, and independent carriers to fulfill delivery requests. As the carriers are not under the platform's control, the platform has to anticipate their reactions, while deciding how to allocate the delivery operations. Indeed, carriers' decisions largely affect the platform's revenue. In this paper, we model this problem using bilevel programming. At the upper level, the platform decides how to assign the orders to the carriers; at the lower level, each carrier solves a profitable tour problem to determine which offered requests to accept, based on her own profit maximization. Possibly, the platform can influence carriers' decisions by determining also the compensation paid for each accepted request. The two considered settings result in two different formulations: the bilevel profitable tour problem with fixed compensation margins and with margin decisions, respectively. For each of them, we propose single-level reformulations and alternative formulations where the lower-level routing variables are projected out. A branch-and-cut algorithm is proposed to solve the bilevel models, with a tailored warm-start heuristic used to speed up the solution process. Extensive computational tests are performed to compare the proposed formulations and analyze solution characteristics.
\end{abstract}%

\section{Introduction}

The term ``last-mile delivery'' refers to the final leg in a Business-To-Consumer delivery service whereby the consignment is delivered to the recipient, either at the recipient’s home or at a collection point \citep{archetti2021}.
In today's last-mile delivery systems, a rather common scenario involves a platform that receives orders from customers who require delivery \citep{punel2017, agatz2012,lei2019}. The platform charges customers for delivery, but delivery operations are performed by independent carriers who have spot contracts with the platform, and receive compensation for each delivered order. The platform is responsible for allocating the orders to the carriers, and, possibly, determining the compensation for each order. On the other hand, carriers decide whether to accept the assignments offered by the platform.
This situation contrasts with delivery systems in which there is a single decision maker and applies to real-world routing applications where multiple stakeholders pursue their own objectives, which may be conflicting.

Clearly, the goal of the platform is maximizing its own profit. However, carriers also act on the base of their own profit maximization, and may possibly reject delivery assignments in case the corresponding compensation is deemed insufficient. Thus, the platform has to anticipate carriers’ decisions in order to optimize its own profit. In this paper, we model this sequential decision-making process using bilevel programming \citep{dempe}. 
The platform acts as a leader, getting a profit associated with each delivered order, corresponding to the price paid by the customer minus the compensation given to the carriers. At the lower level, each carrier maximizes the difference between the compensation offered by the platform for the assigned order and the routing cost incurred in the delivery.

Problems of this type are faced in practice by the so-called peer-to-peer transportation platforms, as, among others: \textit{Uber Eats}, which is Uber's food delivery service that connects customers with local restaurants and independent delivery drivers; \textit{Glovo}, which offers a wide range of delivery services, including food, groceries, and courier services; \textit{Amazon Flex}, which allows individuals to deliver Amazon packages using their own vehicles, offering flexibility and earning opportunities.
These are just a few examples, and we address the reader to \cite{ALNAGGAR2021102139} and \cite{horner2021} for a more complete list of existing peer-to-peer delivery platforms.
These platforms dynamically connect service (e.g., a ride, a delivery) requests with independent carriers that are not under the platform’s control. As a result, the platform cannot guarantee that an offered request will be accepted by the service providers (carriers), who base the selection on their net profits. In some cases, the platform may influence carriers' decisions by selecting, not only the requests offered to each of them but also the compensation associated with each request.

Throughout this paper we assume that the carriers deliver all accepted orders in one single route, hence the lower-level problem faced by each driver corresponds to a Profitable Tour Problem (PTP) \citep{feillet2005}. The PTP belongs to the class of Vehicle Routing Problems (VRPs) with profits. In PTP, a vehicle, starting from a central depot, can visit a subset of the available customers, collecting a specific profit whenever a customer is visited. The objective of the problem is the maximization of the net profit, i.e., the total collected profit minus the total route cost.

Two different settings for the studied problem are considered: the Bilevel PTP with Fixed Margins (BPFM) and the Bilevel PTP with Margin Decisions (BPMD).
In the first setting, the compensations paid to the carriers are fixed in advance. At the upper level, a platform offers disjoint subsets of a given set of items (parcels/orders) to the set $K$ of carriers, whereas, at the lower level, each driver $k\in K$ solves a PTP and decides which items she accepts to serve, as well as the route she follows. Both the platform and the carriers aim to maximize their net profits, calculated differently at the two levels: the profit of the platform is the difference between the price paid by the customers and the compensation paid to the carriers; the profit of the carriers is the difference between the compensation received for the delivered items and the routing costs. We solve both a value function reformulation and a no-good cuts based reformulation of this problem through a branch-and-cut approach. The second setting models the case where the leader may influence the decisions of the carriers by determining not only the sets of items to offer to each driver but also the compensation paid for each of them.
We propose two different bilevel formulations for the latter problem and solve a value function reformulation (in two versions) through a branch-and-cut approach. We further discuss the link between these two bilevel formulations, before comparing them computationally.

The contributions of this paper can be summarized as follows:
\begin{itemize}
    \item We show that considering a single-level formulation for the problem leads either to an overestimation or to an underestimation of the platform's profit. The former is obtained assuming that the carriers can only accept the whole offered bundle of deliveries or refuse it. The latter is obtained assuming a profit-sharing between the platform and the set of carriers.
    \item We introduce the bilevel PTP with fixed compensation margins (i.e., BPFM), and with margin decisions (i.e., BPMD), where the platform acts as the leader who assigns customer orders to carriers, who act as followers; in the BPMD, the leader also defines the compensation for each item. With it, we fill the gap existing in the last-mile delivery literature regarding simultaneously considering upper-level compensation and lower-level routing decisions. 
    \item We provide bilevel formulations for both problems, as well as their corresponding single-level reformulations. We start with the BPFM, and its formulation, properties, and single-level reformulations are used to introduce the more complex case, i.e., the BPMD. For this problem, we propose two alternative bilevel formulations, that are compared to prove that they provide equivalent bounds. For readers who are not familiar with bilevel optimization, which is at the core of our contribution, we present a brief introduction to this topic in Online Appendix~A. 
    \item We propose alternative formulations where the lower-level routing variables are either included or projected out. We also introduce some valid inequalities which can be used to strengthen the proposed formulations.
    \item We develop a branch-and-cut algorithm for the solution of the problems where optimality, as well as feasibility cuts, are inserted dynamically. We also propose a warm-start Mixed-Integer Programming (MIP) heuristic to speed up the solution process. The heuristic is based on the solution of the BPFM under additional constraints.
    \item We perform extensive tests on instances adapted from benchmark instances for related problems to compare the different formulations we propose. We also compare the bilevel solutions with the ones obtained through two single-level formulations, modeling different problem settings, with the aim of highlighting the advantage for the platform of considering the carriers as independent agents optimizing their own profit. We further analyze the gain of the platform when considering margin decisions in our setting.
\end{itemize}

The paper is organized as follows. In Section~\ref{sec:literature} we revise the relevant literature. Section~\ref{sec:formulations} introduces formally the problems under study. Then, Section~\ref{sec:fixed_margins} focuses on the case where the margins are fixed, whereas Section~\ref{sec:variable_margins} is devoted to the case where the margins are decision variables, for which we propose two formulations, which we compare in Section~\ref{sub:comparison}. In particular, in Section~\ref{sub:proj} we derive, for the problem with fixed margins, a new formulation by projecting out the routing variables of the follower's problem in the upper-level formulation;
in Section~\ref{sub:tmax}, we discuss how the proposed formulation changes if the setting of the followers' problem is modified by considering a maximum route duration constraint instead of a capacity constraint (measured as the maximum number of packages that each carrier can deliver). In Section~\ref{sec:valid_inequalities}, we introduce some valid inequalities that strengthen the proposed formulations. In Section~\ref{sec:solution} the solution approaches for the proposed formulations are discussed. Section~\ref{sec:results} describes the computational experiments and presents the numerical results. Finally, Section~\ref{sec:conc} concludes the paper.

\section{Literature review}\label{sec:literature}

The study of peer-to-peer logistic platforms has experienced a significant increase in recent years. For a comprehensive overview, we address the readers to \citet{agatz2012,cleophas2019,wang2019,lei2019}.

A wide literature on suppliers' (carriers, drivers) selection in peer-to-peer logistic services either overlooked the behavior of suppliers or assumed that their preferences are known in advance, sometimes together with a carrier's bid on services \citep{kafle2017}. 
Often, suppliers' responses are assumed to be predeterminable: all suppliers will accept matches as long as they are stable \citep{wang2018} or meet some constraints, in the form, for example, of an upper bound on the extra driving time/distance \citep{masoud2017,arslan2019}. More recently, the setting of peer-to-peer transportation platforms offering a menu of packages to occasional drivers, which we consider in the current work, has been addressed in \citet{mofidi2019,horner2021,ausseil2022}. As in our framework, the suppliers are not employed by the platform, thus the platform does not have perfect knowledge of the suppliers' preferences related to which requests they would be willing to accept. 
In \citet{mofidi2019}, the platform decides the composition of multiple, simultaneous, personalized recommendations to the suppliers, who then select from this set. 
It is assumed that the platform is able to estimate the expected value of suppliers' utility for each alternative assignment. A deterministic bilevel optimization model is thus presented, in which the platform takes as input the expectation of suppliers’ estimated utilities to make recommendation decisions. 
\citet{horner2021} propose another bilevel formulation, based on the deterministic formulation presented in \citet{mofidi2019}, but adjusted by considering stochastic selection behaviors. A single-level relaxation is then proposed and a Sample Average Approximation method is used to optimize the expected value of the objective function over a sample of scenarios for the drivers' behavior. 
Also \citet{ausseil2022} consider a multiple scenario approach, repeatedly sampling potential drivers' selections, solving the corresponding two-stage decision problems, and combining the multiple different solutions through a consensus function. Neither routing nor compensation decisions are taken into account in these models, which we instead consider in our paper. The deficiency in customized incentive systems, in particular, could potentially jeopardize the satisfaction of both the requester and the deliverer in terms of their utility and profit, respectively. This is implicitly highlighted in \cite{horner2021}, when stating that the proposed methods achieve good performances as long as the drivers are well compensated, i.e., when a percentage of 80\% of the platform revenues goes to them. In support of this statement, \cite{hong2019} propose a Stackelberg game to model the interaction between the platform, which decides both the delivery fees and the paths, and the drivers, who, based on the distance and their utility, make decisions about whether to participate in the delivery process or not. The computational experiments show that including decisions about the compensation level in the process can significantly improve delivery efficiency, and reduce delivery costs compared to traditional delivery methods. In \cite{gdowska2018}, both a professional delivery fleet and a set of occasional carriers are taken into account. Whereas the professional fleet is owned by the platform, the occasional carriers are independent, and can only deliver one parcel. Each delivery request has a fixed probability of being rejected by the occasional carriers. A compensation mechanism is considered to determine the fee to pay to each occasional driver in case a delivery request is accepted. \cite{barbosa2023} extend the model introduced in \cite{gdowska2018} by implementing a golden-section search method that determines the best compensation to offer for each request, considering the probability of rejection dependent on the compensation. 

Whereas compensation decisions have been optimized in a bilevel setting in some of the studies listed above, none of them explicitly models routing decisions at the lower level, as is done in this paper. In fact, an important feature of our approaches relies on the routing nature of the lower-level problem, and in the following, we review works where bilevel optimization is used to model VRPs, as  \cite{du2017,nikolakopoulos2015,parvasi2019,marinakis2008,ning2017}. All these works propose metaheuristics to solve the considered problems, and, more specifically, genetic algorithms. In \cite{du2017}, a multi-depot VRP is considered, and at the upper level, the assignment of customers to depots is decided, whereas depots-customers routing decisions are taken at the lower level. \cite{nikolakopoulos2015} address the VRP with backhauls and Time Windows, where a backhaul is a return trip to the depot, during which the vehicles pick up loads from the visited customers. The goal of the leader is to minimize the number of vehicles involved, whereas the follower aims to minimize the duration of the routes. A bilevel bi-objective formulation is proposed in \cite{parvasi2019} to model the VRP where the involved vehicles are school buses. At the upper level, a transportation company selects some locations from a set of potential bus stop locations (first objective) and determines the optimal bus routes among the selected stops (second objective). 
At the lower level, students are allocated to a stop or to another transportation company in order to minimize the time spent on buses. A bilevel location VRP is studied by \cite{marinakis2008}. The upper level concerns decisions at the strategic level, i.e., the optimal locations of facilities. The lower level is about operational decisions regarding optimal vehicle routes. In \cite{ning2017}, a bilevel model is used to formulate the VRP with uncertain travel times. The leader aims at minimizing the total waiting times of the customers, and the followers want to minimize the waiting times of the vehicles before the beginning of customers’ time windows. The uncertain bilevel model is reformulated into an equivalent deterministic one.

\cite{calvete2011} consider a multi-depot VRP within a production–distribution planning problem. At the upper level, a distribution company orders from a manufacturing company the items that must be supplied to the retailers, whereas deciding on the allocation of these retailers (who play the role of customers) to each depot and on the routes that serve them. The manufacturing company, which is the follower, decides what manufacturing plants will produce the ordered items. Both players want to minimize their own costs. An ant colony optimization approach is developed to solve the bilevel model. The same conflicting agents are considered in \cite{camachovallejo2021}, but with different objectives: the distribution company aims at maximizing the profit gained from the distribution process and minimizing CO2 emissions; the manufacturer aims at minimizing its total costs. The upper level is thus a bi-objective problem. A tabu search heuristic is designed to obtain non-dominated feasible solutions for the distribution company. A hybrid algorithm combining ant colony optimization and tabu search is proposed in \cite{wang2021} to solve a bilevel problem modeling the location-routing problem with cargo splitting, under four different low-carbon policies. The leader is the engineering construction department, which decides on the distribution center location. The follower takes the distribution department as the decision-maker to solve the VRP.

In \cite{santos2020}, a bilevel formulation is proposed to model the VRP with backhauls, without taking into account time windows considered in \citet{nikolakopoulos2015}.
At the upper level, a shipper aims at minimizing the transportation costs by integrating delivery and pickup operations in the routes, whereas at the lower level a set of carriers, acting together, want to maximize their total net profit. The carriers, who may also serve requests from other shippers, may not be willing to collaborate with the shipper. To motivate the carriers to perform integrated routes, the shipper pays them an additional incentive. 
A reformulation is used to build an equivalent single-level problem. 

In some cases, a bilevel approach has been proposed to address a routing problem even if the decision maker of both levels is the same. For instance, \cite{pardalos2007} formulate the capacitated VRP as a bilevel problem, where the first-level decisions concern the assignment of customers to the routes, and the second-level decisions determine the actual routes. In \cite{handoko2015}, the last-mile delivery problem faced in an urban consolidation center, which corresponds to a PTP with multiple vehicles, is modeled using bilevel programming.  At the upper level, the customers to serve are selected, in order to maximize the profit of the carriers' alliance. At the lower level, a Capacitated VRP deals with the optimization of the route given the set of selected customers.

None of the works mentioned above deals with BPFM or BPMD so we now introduce a formal description of these problems.

\section{Problems definition and formulations}\label{sec:formulations}

In this section, we provide a formal description of the problems we address, whose corresponding mathematical formulations are proposed in the next sections (Section~\ref{sec:fixed_margins} and \ref{sec:variable_margins}). After introducing the notations we will use throughout the paper, as well as the two problems we address (BPFM and BPMD), we show in Section~\ref{sub:singlelev} that using single-level formulations leads to a misprediction of the platform's profit. 

\textit{Input sets and parameters\\}
Sets and parameters used in the definition of the problems are listed below.
\begin{itemize}
	\item $\mathcal{G}=(V, A)$: routing network over all nodes $V_0 = \{0,\dots,n\}$; in particular, $V = V_0 \setminus \{0\}$ corresponds to the set of customers to serve, and node $0$ to the depot where routes start and end;
	\item $K$: index set of carriers/drivers/followers/vehicles; 
	\item $p_i$: price that is paid to the platform if customer $i$ is served; 
	\item $\bar p^k_i$: compensation paid by the platform to carrier $k$ if she accepts to serve customer~$i$;
	\item $c^k_{ij}$: arc $\left(i,j\right)$ weight representing travel time for carrier $k$; we assume that arc weights $c^k_{ij}$ satisfy the triangle inequality;
	\item $b^k$: upper bound on the number of items carrier $k$ can deliver ($b^k < n$ for all $k$); 
	\item $t_{max}^k$: upper bound on the duration of the route for carrier $k$. 
\end{itemize}
\vspace*{2mm}

\textit{Notations\\}
In the rest of this paper, we denote by $\delta^+(i)$ ($\delta^-(i)$) the set of arcs exiting from (entering) vertex $i$, and by $\mathcal{T}$ be the set of all routes in graph $\mathcal{G}=(V, A)$. Moreover, we denote by $T \in \mathcal{T}$ an arbitrary route, with $V(T)$ and $A(T)$ being the set of vertices and arcs visited and traversed by the route, respectively. Furthermore, let $C^k(T) = \sum\limits_{(i,j) \in A(T)} c^k_{ij}$ be the arc cost of tour $T$ for driver $k$.

\vspace*{2mm}
\textit{The BPFM and the BPMD\\}
We consider a single-leader multiple-follower Stackelberg game in which there is a set of items $I$ that needs to be delivered to a corresponding set of customers $V$. Each customer $i \in V$ requires exactly one item in $i \in I$ (multiple items required by the same customer are considered as multiple duplicated customers). For this reason, in the following, we will refer just to the set $V$ (and $V_0$ when including the depot), either when referring to the delivered items, or to the served customers. 
An intermediary platform, acting as a leader, receives a price $p_i$ for each item to be delivered. Given a set $K$ of potential carriers (e.g., occasional drivers), the intermediary searches for carriers that can deliver these items, and pays to carrier $k \in K$ a compensation $\bar p^k_i$, $0 < \bar p^k_i < p_i$, for each delivered item, i.e., for each served customer. The difference between the price $p_i$ and the compensation $\bar p_i^k$ is the net profit of the platform in case item $i$ is delivered by carrier $k$. Note that it is 0 in case the item is not delivered by any carrier. 
The net profit associated with item $i$ expressed as a fraction of the price is defined as the ``profit margin'', i.e., $\frac{p_i-p^k_i}{p_i}$.
The leader has to create $|K|$ disjoint subsets of items, each of them to be offered to a carrier. We call $P_k$ the subset of items offered to carrier $k \in K$. 
Each carrier $k \in K$ receives the proposal, and, based on her net profit, decides on a subset of customers $Q_k \subseteq P_k$ to accept to serve. To this end, each carrier solves a PTP with respect to the given set of items (customers) $P_k$. A carrier can refuse to deliver some items, in which case the intermediary's margin for this item becomes zero.
The goal of the leader is to make a call to the carriers, so as to maximize its revenue, which is defined as
$$\sum_{k \in K} \sum_{i \in Q_k} (p_i - \bar p^k_i).$$

We consider two alternative assumptions concerning the number of packages to be served: either we assume that carrier $k$ cannot deliver more than $b^k$ items, or we assume that there is a travel time limit for the follower, $t^k_{max}$.

We assume, without loss of generality, an \textit{optimistic} bilevel setting, i.e., for a given leader's choice, if follower $k$ has multiple optimal responses determined by different sets $\tilde{Q}_k$ of items to be delivered, she will accept to deliver the items that are more favorable to the leader. This means that the follower $k$ will choose to deliver the subset
\begin{equation*}
     Q^*_k \in \arg \max\limits_{\tilde{Q}_k} \{ \sum_{i \in \tilde{Q}_k} (p_i - \bar p^k_i)~:~\tilde{Q}_k \text{ is optimal for the follower $k$}\},
\end{equation*}
where $\tilde{Q}_k$ is optimal for the follower $k$ if it is an optimal solution of the PTP solved by the follower $k$ with respect to the set of items $P_k$ assigned to her by the leader, i.e.:
\begin{align*}
    \tilde{Q}_k \in \arg\max_{Q_k} \{ \sum_{i\in Q_k} \bar p_i^k -{\sum_{\substack{(i,j)\in A:\\ i,j\in Q_k}}} c^k_{ij}
   ~:~ Q_k \subseteq P_k, \; \text{and the route serving nodes $i \in Q_k$ is feasible}\}.
\end{align*}
This is without loss of generality because, in the implementation, when we find the optimistic solution, the leader can a-posteriori add a small $\epsilon$ to the compensation value for the proposed parcels to break the ties.
We further assume that there is no communication between the carriers, i.e., a carrier $k$ is not aware of what is offered to carriers $k' \in K \setminus\{k\}$. Thus, there is no possible bargaining, and no need to establish a generalized Nash equilibrium between the multiple followers' solutions.
Nevertheless, the problems cannot be seen as single-level, because the carriers are selfish agents and do not have to collaborate with the intermediary. Their major goal is to maximize their own net profits, and, therefore, a solution that is optimal for the leader is not necessarily optimal for the follower. We provide in the following section two different single-level formulations, as well as an illustrative example to support this claim.

In the BPFM we assume that the leader does not decide on the compensations to pay to the carriers, i.e., $\bar p_i^k$ is fixed and given a-priori. In the BPMD, instead, we assume that the intermediary platform decides, in addition to the assignment of customers to carriers, the compensation to pay to each carrier (measured as the fractional margin of the price obtained by the platform). As the compensation is a fraction of the price, what remains is the ``margin'' gained by the platform, so we use the term ``margin optimization''. In particular, we consider $|M|$ different margin values that the intermediary platform can choose for each item. This set is the same for all items. We denote as $p_{mi}$ the profit gained by the platform when applying margin $m\in M$ to item $i$.

\subsection{Single-level formulations of related problems}\label{sub:singlelev}
We discuss here two different single-level formulations of the problem faced by the platform, which lead to an upper and a lower bound of the platform's profit, respectively.

Let us suppose that the platform maximizes its own profit while imposing that the profit of the carriers is larger than a certain threshold, e.g., their willingness to accept (WTA), and assuming that $P_k = Q_k$ once this threshold is satisfied. This last assumption defines a different problem setting in which carriers are not allowed to decide on the subset of the assigned items to serve, but can only accept or reject the whole assigned bundle. Assume also that the WTA value for all carriers is 0, i.e., they are willing to accept to serve in case the net profit is non-negative. This single-level setting, which we call the WTA-PTP, involves two types of binary variables: $\alpha^k_i$ for each $i\in V_0$ and $k\in K$, which is 1 if item $i$ is offered to carrier $k$; and $z^k_{ij}$ for all $(i,j)\in A$ and $k\in K$, which is $1$ if arc $\left(i,j\right) \in A$ is traversed by carrier~$k$. The WTA-PTP can be formulated as:
\begin{subequations}\label{eq:wta-ptp}
\begin{empheq}[left=(\mathsf{\textcolor{OliveGreen}{WTA\text{-}PTP}})\empheqlbrace]{align}
\max\limits_{{\alpha},{z}} &  \; \sum_{k \in K}\sum_{i \in V} (p_{i}-\bar{p}^k_i) \alpha_{i}^k &&  \label{eq:wta-ptp:of}\\
    \text{s.t.} & \;  \sum_{k\in K} \alpha^k_{i} \leq 1 \qquad && \forall\; i \in V \label{eq:wta-ptp:partition}\\
    & \; \sum_{i\in V} \alpha^k_{i}\leq b^k && \forall\; k \in K  \label{eq:wta-ptp:budget}\\
    & \; \sum_{i \in V} \bar p_{i}^k \alpha_{i}^k - \sum_{(i,j) \in A} c^k_{ij} z_{ij}^k  \geq 0 && \forall\; k \in K \label{eq:wta-ptp:wta}\\
     & \; ({\alpha}^k,{z}^k) \text{ is a route} && \forall\; k \in K \label{eq:wta-ptp:route}\\
    & \; {\alpha}^k\in \{0,1\}^{n+1}, {z}^k\in \{0,1\}^{|A|} && \forall\; k \in K.
\end{empheq}
\end{subequations}
The objective function~\eqref{eq:wta-ptp:of} represents the net profit of the platform to be maximized. The first set of constraints~\eqref{eq:wta-ptp:partition} imposes that each item is served by at most one carrier; the second ones~\eqref{eq:wta-ptp:budget} are the capacity constraints (equivalently one could consider the route duration limit); constraints~\eqref{eq:wta-ptp:wta} state that each carrier should have a nonnegative profit; finally, constraints~\eqref{eq:wta-ptp:route} ensure that ${z}^k$ is the incidence vector of a route that visits the depot and all customers $i$ such that $\alpha^k_i=1$. The value of WTA-PTP as calculated in~\eqref{eq:wta-ptp:of} provides an upper bound to BPFM. Indeed, given constraints~\eqref{eq:wta-ptp:wta}, in WTA-PTP, we assume carriers accept to serve the whole assigned bundle of items, as long as the associated net profit is non-negative. Instead, in BPFM, carriers determine the subset of assigned items maximizing their net profit. 
However, we note that, given any feasible solution $\bar{{\alpha}}$ of $(\mathsf{\textcolor{OliveGreen}{WTA\text{-}PTP}})$, a bilevel feasible solution can be \textit{recovered} by solving a PTP for each carrier, on the subset of items assigned to that carrier in the $(\mathsf{\textcolor{OliveGreen}{WTA\text{-}PTP}})$ solution. Indeed, in this way, we can compute the accepted subset of the items among the ones for which $\bar \alpha_i$ is $1$, and these subsets, associated with the optimal tours to serve them, represent a feasible solution of the BPFM.

Formulation~$(\mathsf{\textcolor{OliveGreen}{WTA\text{-}PTP}})$ can be easily generalized to the case in which the platform can decide also on the compensation paid to each carrier, in terms of the price margin the platform gains from each item. Indeed, by replacing variable $\alpha_i^k$ with $A_{mi}^k$ which is $1$ if item $i$ is served by carrier $k$ with a margin of $m$ for the platform, we obtain the following formulation, which gives us a valid upper bound for BPMD:
{\small\begin{subequations}\label{eq:wta-ptp-md}
\begin{empheq}[left=(\mathsf{\textcolor{OliveGreen}{WTA\text{-}PTP\text{-}MD}})\empheqlbrace]{align}
			\max\limits_{{A},{z}} &  \; \sum_{k \in K}\sum_{i \in V}\sum_{m \in M} p_{mi}A_{mi}^k &&  \label{eq:wta-ptp-md:of}\\
			\text{s.t.} & \;  \sum_{k\in K}\sum_{m \in M} A^k_{mi} \leq 1 \qquad && \forall\; i \in V \label{eq:wta-ptp-md:partition}\\
			& \; \sum_{i\in V}\sum_{m \in M} A^k_{mi}\leq b^k && \forall\; k \in K  \label{eq:wta-ptp-md:budget}\\
			& \; \sum_{i \in V}\sum_{m \in M} \bar p_{mi}^k A_{mi}^k - \sum_{(i,j) \in A} c^k_{ij} z_{ij}^k  \geq 0 && \forall\; k \in K \label{eq:wta-ptp-md:wta}\\
			& \; (\sum_{m \in M} {A}_m^k,{z}^k) \text{ is a route} && \forall\; k \in K \label{eq:wta-ptp-md:route}\\
			& \; {A}_m^k\in \{0,1\}^{n+1}, {z}^k\in \{0,1\}^{|A|} && \forall\; m \in M, k \in K.
		\end{empheq}
\end{subequations}} 
Let us now consider a different setting. If a profit-sharing between the platform and the set of carriers is considered, a different single-level formulation is obtained. This situation arises when dealing with the so-called Urban Consolidation Centers (UCC), or City Logistics Centers, i.e., logistics facilities strategically located within urban areas to optimize the efficiency of last-mile deliveries \citep{handoko2015}. In this case, the platform corresponds to the alliance of all carriers, and the objective function is the total profit of this alliance. The corresponding single-level formulation is then:
\begin{subequations}\label{eq:ucc-ptp}
\begin{empheq}[left=(\mathsf{\textcolor{OliveGreen}{UCC\text{-}PTP}})\empheqlbrace]{align}
    \max\limits_{{\alpha},{z}} &  \; \sum_{k \in K}\left[\sum_{i \in V} \bar{p}^k_i\alpha_i^k - \sum_{(i,j) \in A} c^k_{ij} z_{ij}^k\right]  &&  \label{eq:ucc-ptp:of}\\    
    \text{s.t.} & \;  \sum_{k\in K} \alpha^k_{i} \leq 1 \qquad && \forall\; i \in V \label{eq:ucc-ptp:partition} \\
    & \; \sum_{i\in V} \alpha^k_{i}\leq b^k && \forall\; k \in K \label{eq:ucc-ptp:budget} \\
    & \; ({\alpha}^k,{z}^k) \text{ is a route} && \forall\; k \in K \label{eq:ucc-ptp:route}\\
    & \; {\alpha}^k\in \{0,1\}^{n+1}, {z}^k\in \{0,1\}^{|A|} && \forall\; k \in K.
\end{empheq}
\end{subequations}
This single-level formulation, which we define as UCC-PTP, can be used to obtain a lower bound on the profit of our delivery platform in the setting we study. Indeed, the solution of UCC-PTP is a feasible solution for the BPFM, as well as for the BPMD.

With the following example, we better clarify the relationships between the two single-level problems presented above and the bilevel problem we focus on.
\begin{figure}[ht]
	\centering
	\includegraphics[width=0.6\textwidth]{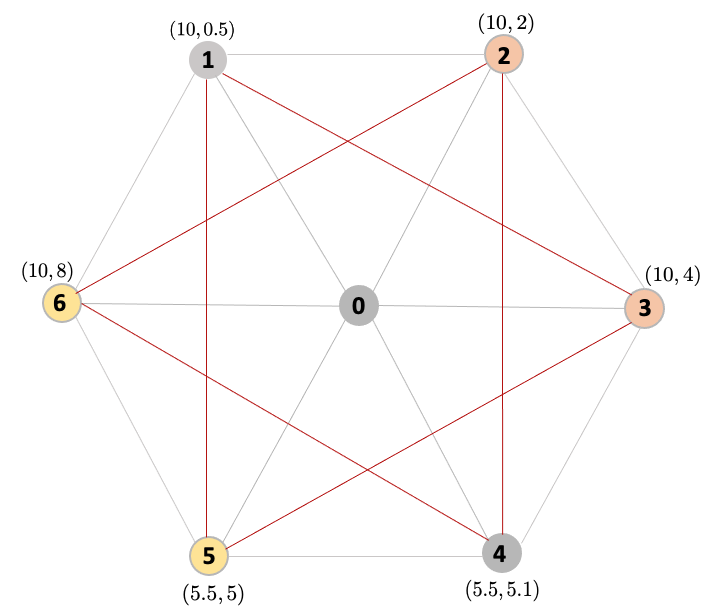}
	\caption{Example graph. Weights of gray/red edges are $0.5/1$ respectively. Labels next to each vertex $i$ display $(p_i,\bar p^k_i)$ for all $k$.} 
	\label{fig:1}
\end{figure}

Consider the complete graph with $7$ nodes in Figure~\ref{fig:1}, where node $0$ is the depot. Assume that there are two carriers $a$ and $b$, and that for all $k\in\{k_a,k_b\}$:
\begin{itemize}
	\item $c_{0j}^{k} = 0.5$ for all $j \in V$, $c_{12}^{k} = c_{23}^{k} = c_{34}^{k} = c_{45}^{k} = c_{51}^{k} = 0.5$, $c_{13}^{k} = c_{14}^{k} = c_{24}^{k} = c_{25}^{k} = c_{35}^{k} = 1$, and $c^k_{ij} = c_{ji}^k$ for all $(i,j) \in A$;
	\item $b^{k}=2$;
	\item $(p_1,\bar p_1^{k})=(10,0.5)$, $(p_2,\bar p_2^{k})=(10,2)$, $(p_3,\bar p_3^{k})=(10,4)$, $(p_4,\bar p_4^{k})=(5.5,5.1)$, $(p_5,\bar p_5^{k})=(5.5,5)$, $(p_6,\bar p_6^{k})=(10,8)$.
\end{itemize}
 In the WTA-PTP setting, the leader would assign to carriers $k_a$ and $k_b$ items $1$, $2$, $3,$ and $6$ (two to each carrier), predicting a total profit of $25.5$. The platform would exclude items $4$ and $5$ because they correspond to the smallest margins of $0.4$ and $0.5$, respectively. Let us now consider the BPFM as described above. The carrier who is assigned item $1$, together with any other item among $2$, $3,$ and $6$, would not deliver it because serving only the other offered item ($2$, $3$, or $6$) would produce a higher profit than serving the assigned pair. Thus, the profit of the platform associated with their optimal WTA-PTP solution is $16$ (value of the \textit{recovered} bilevel feasible solution).

In the UCC-PTP setting, the UCC would assign to carrier $k_a$ items $3$, and $4$, and to carrier $k_b$ items $5,$ and $6$ (or vice-versa), with a UCC-PTP value of $19.1$. This is a feasible solution in the BPFM, with a value of $8.9,$ but it is not the optimal one. Indeed, the optimal solution of BPFM would assign items $2, 3, 5$ and $6$ (two to each carrier, see the colors of the nodes in Figure~\ref{fig:1} for a graphical representation), yielding a total profit of $16.5$.

This example illustrates that the problem we study is bilevel in nature and that, to avoid misprediction of the true profit of the leader, it is crucial to integrate the optimal followers' response inside of the optimization problem, as it is done with the BPMF. Approximating this optimal response, and either replacing it with WTA assumption or considering the carriers as part of the platform leads to a suboptimal decision for the platform. Nevertheless, the values of WTA-PTP and UCC-PTP can be used to derive an upper and a lower bound on the net profit of the leader in the bilevel setting, respectively.

\section{The Bilevel PTP with Fixed Margins}\label{sec:fixed_margins}

In this section, we introduce a formulation for the BPFM, first, with a limit on the number of packages, and then, in Subsection~\ref{sub:tmax}, with a limit on the duration of the route. 

We recall that in the BPFM the leader does not decide on the compensations to be paid to the carriers, i.e., $\bar p_i^k$ is fixed and given a-priori.
To model the leader's decision on the proposal to each carrier, we use the binary decision variable $x^k_{i}$ for each $i\in V$, $k\in K$, which takes the value 1 if the platform assigns customer $i$ to carrier $k$, i.e., $i\in P_k$.
In addition, we define the lower-level binary variable $y^k_{i}$ for each $i\in V_0$ and $k\in K$, to model the acceptance decision of the carriers. $y^k_{i}$ is 1 if carrier $k$ accepts to serve customer $i$, i.e., $i \in Q_k$; in particular, $y^{k}_0$ is equal to 1 in case carrier $k$ accepts to make at least one delivery.
Finally, we consider the set of lower-level binary variables $z^k_{ij}$ for all $(i,j)\in A$ and $k\in K$ (already introduced for formulations~\eqref{eq:wta-ptp} and \eqref{eq:ucc-ptp}) to model the routing decisions of the carriers. In particular, $z^k_{ij}=1$ if arc $\left(i,j\right) \in A$ is traversed by carrier $k$.

Then, the BPFM formulation is as follows:
\begin{subequations}\label{eq:routing}
	 \begin{align}
		\max\limits_{{x},{y}} & \quad \sum_{i \in V} \sum_{k \in K} \left(p_i - \bar p^k_{i}\right) y^k_i&& \label{eq:routing-of}\\
		\text{s.t.} & \quad \sum_{k\in K} x^k_{i} \leq 1 \qquad && \forall\; i \in V \label{eq:routing-partition}\\
		& \quad {y}^k \in S^k_{\varphi}({x}^k) && \forall\; k \in K \label{eq:routing-valuefunction}\\
		& \quad {x}^k \in \{0,1\}^{n}, \; {y}^k \in \{0,1\}^{n+1}&& \forall\; k \in K, \label{eq:sets}
	\end{align}
\end{subequations}
where $S^k_{\varphi}({x}^k)$ is the set of optimal solutions of the $k$-th follower problem, which, for a given $\tilde{{x}}^k$, is formulated as:
\begin{subequations}\label{eq:routingLL}\
	\begin{align}
		\varphi^k(\tilde{{x}}^k) = \quad \max\limits_{{y},{z}} & \quad\sum_{i \in V} \bar p^k_{i} y^k_i - \sum_{(i,j) \in A} c^k_{ij} z^k_{ij} && \label{eq:routingLL-of}\\
		\text{s.t.} & \quad y^k_{i} \leq \tilde x^k_i &&\forall\; i \in V \label{eq:routingLL-interdiction}\\
        & \quad \sum_{i\in V} y^k_{i} \leq b^k && \label{eq:routingLL-budget} \\
		& \quad ({y}^k,{z}^k) \text{ is a route} && \label{eq:routingLL-route}\\
		& \quad {y}^k\in \{0,1\}^{n+1}, {z}^k\in \{0,1\}^{|A|}.&&
	\end{align}
\end{subequations}
Constraints~\eqref{eq:routing-partition} state that each item is offered to at most one carrier. Constraints~\eqref{eq:routing-valuefunction} ensure that the solution ${y}^k$ returned by the $k$-th follower is an \textit{optimal response} with respect to the set of items offered to her by the leader.
The objective function~\eqref{eq:routingLL-of} of follower $k$ is the difference between the sum of the delivered items' compensations and the total travel cost (length of the route that visits all customers accepted by the carrier). Constraints~\eqref{eq:routingLL-interdiction} link the decisions of the leader with the ones of the follower and establish that an item $i$ can be delivered by carrier $k$ only if it is offered to her. Constraints~\eqref{eq:routingLL-budget} impose that carrier $k$ can accept to serve at most $b^k$ items. Finally, constraint~\eqref{eq:routingLL-route} states that ${z}^k$ is the incidence vector of a route that visits the depot and all customers $i$ such that $y^k_i=1$. More in detail, constraint~\eqref{eq:routingLL-route} is given by:
\begin{subequations}
	\begin{alignat}{3}
		\sum\limits_{(i,j) \in \delta^+(i)} z_{ij}^k \; = \; & y_i^k &\quad\forall\;i \in V\label{eq:route1}\\
		\sum\limits_{(i,j) \in \delta^-(i)} z_{ji}^k \; = \; & y_i^k &\quad\forall\;i \in V\label{eq:route2}\\
		\sum\limits_{i \in S , j \in S} z_{ij}^k \; \leq\; & \sum_{i \in S} y_i - y_h &\quad\forall\;S \subseteq V, |S|\geq 2, h \in S. \label{eq:subtour}
	\end{alignat}
\end{subequations}
The sets of equalities~\eqref{eq:route1}--\eqref{eq:route2} impose that one arc enters and leaves each visited vertex.
The set of exponentially many inequalities~\eqref{eq:subtour} ensures subtour elimination and connection to the depot. 

In what follows, in order to derive a single-level reformulation of the BPFM formulation~\eqref{eq:routing}, we 
propose two approaches. The first one, presented in Subsection~\ref{sub:BPFM-valuefunct}, is the value function reformulation approach typically used in bilevel literature (see Online Appendix~A for more details). The relationship of the so-obtained single-level formulation with the WTA-PTP and UCC-PTP formulations is discussed in Subsection~\ref{sub:comparison-single}, and an alternative formulation is obtained by projecting out the routing variables in Subsection~\ref{sub:proj}. A variant of the problem when considering a limit on the route duration instead of the capacity constraints is presented in Subsection~\ref{sub:tmax}. Then, in Subsection~\ref{sub:BPFM-nogood}, the second reformulation approach, based on bilevel \textit{no-good} cuts \citep{tahernejad2020valid} exploiting the binary nature of the upper-level decisions, is discussed.

\subsection{Value function reformulation}\label{sub:BPFM-valuefunct}
As explained in Online Appendix~A, one possible way to reformulate optimistic bilevel problems is through the so-called value function approach, which, for model~\eqref{eq:routing}, leads to the following reformulation:
\begin{subequations}\label{eq:routing_single1}
	\begin{align}
   	\max\limits_{{x},{y},{z}} & \quad \sum_{i \in V} \sum_{k \in K} \left(p_i - \bar p^k_{i}\right) y^k_i&&  \label{eq:routing_single-of1}\\
		\text{s.t.} & \quad  \sum_{k\in K} x^k_{i} \leq 1 \qquad && \forall\; i \in V \label{eq:routing_single-partition1}\\
        & \quad \sum_{i\in V} x^k_{i} \leq b^k && \forall\; k \in K \label{eq:routing_single-budget1} \\
	   \quad \quad & \quad \sum_{i \in V}  \bar p^k_{i} y^k_i  - \sum_{(i,j) \in A} c^k_{ij} z^k_{ij} \geq \varphi^k({x}^k) && \forall\; k \in K \label{eq:routing_single-valuefunction1}  \\
	   & \quad y^k_{i}  \leq  x^k_i && \forall\; i \in V,\; k \in K  \label{eq:routing_single-interdiction1}\\
	   & \quad ({y}^k,{z}^k) \text{ is a route} && \forall\; k \in K \label{eq:routing_single-route1}\\
	   &  \quad {x}^k{\in \{0,1\}^{n}},{y}^k\in \{0,1\}^{n+1},{z}^k\in \{0,1\}^{|A|} && \forall\; k \in K.\label{eq:routing_single-sets1}
	\end{align}
\end{subequations}
In formulation~\eqref{eq:routing_single1}, we replace constraints
\begin{equation}\label{eq:y_routing}
    \sum_{i\in V} y^k_{i} \leq b^k \quad \forall\; k \in K
\end{equation}
of the classic value function reformulation of the BPFM~\eqref{eq:routing} with constraints~\eqref{eq:routing_single-budget1}, which, combined with \eqref{eq:routing_single-interdiction1}, imply~\eqref{eq:y_routing}. Constraints~\eqref{eq:routing_single-budget1} on ${x}$ are indeed stronger than~\eqref{eq:y_routing}, and this helps in the solution of the single-level formulation~\eqref{eq:routing_single1}.

The difficulty of solving formulation~\eqref{eq:routing_single1} lies in the value-function constraints~\eqref{eq:routing_single-valuefunction1}: the function $\varphi^k({x}^k)$ is non-convex and non-continuous. Thus, in order to derive a single-level MIP formulation of the problem, we further analyze this function, which we try to convexify. Under the assumption that arc weights $c^k_{ij}$ satisfy the triangle inequality (in order to ensure that $z^k_{ij}=1$ implies $y^k_i=y^k_j=1$ for all $k$), we derive the following result.

\begin{prop}\label{prop}
For any $k\in K$, given a vector $\tilde{{x}}^k \in \{0,1\}^{n}$  satisfying constraints~\eqref{eq:routing_single-partition1}--\eqref{eq:routing_single-budget1}, there always exists an optimal solution of the following problem, which is also optimal for $\varphi^k(\tilde{{x}}^k)$:
\begin{subequations}\label{eq:routingLL1}
    \begin{align}
        \bar \varphi^k(\tilde{{x}}^k) = \quad \max\limits_{{y},{z}} & \quad \sum_{i \in V}  \bar p^k_{i} y^k_i \tilde x^k_i - \sum_{(i,j) \in A} c^k_{ij} z^k_{ij} && \label{eq:routingLL1-of}\\
        \text{s.t.} &\quad ({y}^k,{z}^k) \text{ is a route} && \label{eq:routingLL1-route}\\
        & \quad {y}^k\in \{0,1\}^{n+1}, {z}^k\in \{0,1\}^{|A|}. && \label{eq:routingLL1-sets}
    \end{align}
\end{subequations}
\end{prop}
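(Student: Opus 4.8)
The plan is to show that the two problems share the same optimal value and that a single solution can be made optimal for both, by means of a shortcutting (vertex-removal) argument that exploits the triangle inequality. The two formulations differ in exactly two ways: problem~\eqref{eq:routingLL1} drops the interdiction constraints~\eqref{eq:routingLL-interdiction}, and in its objective~\eqref{eq:routingLL1-of} the profit term $\bar p^k_i y^k_i$ is multiplied by the fixed indicator $\tilde x^k_i$. The observation driving the whole proof is that, because of this extra factor, a customer $i$ with $\tilde x^k_i = 0$ contributes \emph{zero} profit in~\eqref{eq:routingLL1-of}, while a route that visits it still pays the corresponding travel cost.

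First I would establish the easy inequality $\bar\varphi^k(\tilde x^k) \ge \varphi^k(\tilde x^k)$. Every $(y^k,z^k)$ feasible for~\eqref{eq:routingLL} is feasible for~\eqref{eq:routingLL1}, since the latter only removes constraints~\eqref{eq:routingLL-interdiction}. Moreover, on such a solution $y^k_i \le \tilde x^k_i$ together with $y^k_i,\tilde x^k_i \in \{0,1\}$ forces $y^k_i \tilde x^k_i = y^k_i$, so the objectives~\eqref{eq:routingLL-of} and~\eqref{eq:routingLL1-of} coincide there. Hence any optimizer of~\eqref{eq:routingLL} is admissible for~\eqref{eq:routingLL1} with the same value, which yields the inequality.

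The core of the argument is the reverse direction, carried out constructively. I would take an optimal route $(y^*,z^*)$ of~\eqref{eq:routingLL1} and, as long as it visits some customer $i$ with $\tilde x^k_i = 0$ (i.e. $y^*_i = 1$ but $y^*_i \tilde x^k_i = 0$), remove $i$ from the tour: delete the two arcs of $z^*$ incident to $i$ and reconnect its predecessor $a$ directly to its successor $b$. By the triangle inequality $c^k_{ab} \le c^k_{ai} + c^k_{ib}$, so the total route cost does not increase, while the profit is unchanged because the removed customer contributed nothing to~\eqref{eq:routingLL1-of}. Iterating over all such customers (and applying the triangle inequality telescopically when several are consecutive) produces a route $(\tilde y,\tilde z)$ that visits only offered customers, hence satisfies $\tilde y_i \le \tilde x^k_i$ and is feasible for~\eqref{eq:routingLL}. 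Its value in~\eqref{eq:routingLL1-of} is at least that of $(y^*,z^*)$, so $(\tilde y,\tilde z)$ is again optimal for~\eqref{eq:routingLL1}; combined with the first inequality and the fact that the two objectives agree on solutions feasible for~\eqref{eq:routingLL}, this forces $\bar\varphi^k(\tilde x^k)=\varphi^k(\tilde x^k)$ and makes $(\tilde y,\tilde z)$ optimal for $\varphi^k(\tilde x^k)$ as well, which is precisely the stated conclusion.

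The step I expect to be most delicate is verifying that the shortcutting operation keeps $(\tilde y,\tilde z)$ a genuine route in the sense of~\eqref{eq:route1}--\eqref{eq:subtour}: one must check that removing an interior vertex and splicing the tour preserves the degree equalities~\eqref{eq:route1}--\eqref{eq:route2}, does not create subtours forbidden by~\eqref{eq:subtour}, and correctly handles the boundary cases (the depot node, and blocks of several consecutive non-offered customers). The triangle inequality is exactly the hypothesis that makes each such splice cost-nonincreasing, so it is essential and must be invoked carefully at every removal.
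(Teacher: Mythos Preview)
Your proposal is correct and follows essentially the same approach as the paper: both arguments take an optimal solution of~\eqref{eq:routingLL1}, observe that any visited customer $i$ with $\tilde x^k_i=0$ contributes zero profit, and shortcut it out via the triangle inequality to obtain an optimal solution that also satisfies~\eqref{eq:routingLL-interdiction}. Your write-up is in fact more complete than the paper's, since you explicitly establish the easy direction $\bar\varphi^k(\tilde x^k)\ge\varphi^k(\tilde x^k)$ and flag the need to verify that shortcutting preserves the route constraints~\eqref{eq:route1}--\eqref{eq:subtour}, points the paper leaves implicit.
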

\begin{proof}
    Let $\tilde{{x}}^k \in \{0,1\}^{n}$ be a vector satisfying constraints~\eqref{eq:routing_single-partition1}--\eqref{eq:routing_single-budget1}. Let us consider a given $k \in K$ and let $P_k\subseteq V$ be the subset of vertices associated with $\tilde{{x}}^k$, i.e., $P_k=\{i\in V: \tilde x^k_i=1\}$, with $|P_k| = b^k$.
    Being $S_{\bar \varphi}^k(\tilde{{x}}^k)$ the set of optimal solutions of problem~$\bar \varphi^k(\tilde{{x}}^k)$, we want to prove that there exists $(\hat{{y}}^k, \hat{{z}}^k) \in S_{\bar \varphi}^k(\tilde{{x}}^k)$
    such that $a)$ its objective function value is $ \sum\limits_{i \in P_k} \bar p^k_{i} \hat y^k_i - \sum\limits_{(i,j) \in A} c^k_{ij} \hat z^k_{ij}$, and $b)$ $\hat y^k_i=0$ for all $i \in V\setminus P_k,$ despite the relaxation of constraints~\eqref{eq:routingLL-interdiction} in formulation~\eqref{eq:routingLL1}. If this is true, constraint~\eqref{eq:routingLL-budget} will also hold because of constraints~\eqref{eq:routing_single-budget1}, which impose that $|P_k| = b^k$.
    The reasoning is the following. If an $i' \in V\setminus P_k$ exists s.t.\ $\hat y^k_{i'}=1$, then the compensation collected from $i'$ would be 0 as $\tilde x^k_{i'}=0$. Also, for the triangle inequality, going directly from the predecessor to the successor of $i'$ in the optimal solution is cheaper (or at most has the same cost) than going through $i'$. Thus, either the solution visiting $i'$ is not optimal, or there exists a solution that does not visit $i'$ with the same objective function value. This procedure can be iterated over all $i \in V\setminus P_k$.
\end{proof}
	
According to Proposition \ref{prop}, the problem of follower $k$ could be solved by considering the entire graph~$\cal{G}$ and multiplying the compensation associated with each customer $i \in V$ by the value $\tilde{x}_i^k$, representing the assignment made by the leader. In this way, in case $\tilde{x}_i^k=0$, customer $i$ would not be visited in the optimal solution of follower $k$.

Let $P^k_{ext}$ denote the set of all the extreme points $({y}^k,{z}^k)$ of the convex hull of the follower's feasible solutions space determined by constraints~\eqref{eq:routingLL1-route}--\eqref{eq:routingLL1-sets}. It holds that:
$$ \varphi^k({x}^k) = \max_{(\hat{{y}}^k, \hat{{z}}^k) \in P^k_{ext}} \left\{\sum_{i \in V}  \bar p^k_{i}  \hat y^k_i x^k_i - \sum_{(i,j) \in A} c^k_{ij} \hat z^k_{ij}\right\}, $$
which is a convex function in ${x}^k$. 

We notice that, in terms of routes $T \in \mathcal{T},$ problem~\eqref{eq:routingLL1} can be restated as:
\[
\bar  \varphi^k(\tilde{{x}}^k) =  \max\limits_{T \in \mathcal{T}} \left\{ \sum_{i \in V(T)} \bar p_i^k\tilde x^k_i - C^k(T)  \right\}.
\]

In other words, constraints~\eqref{eq:routing_single-valuefunction1} can be replaced by constraints
\begin{equation}\label{eq:convexT}
    \sum_{i \in V}  \bar p^k_{i}  y^k_i - \sum_{(i,j) \in A} c^k_{ij} z^k_{ij} \geq \sum_{i \in V(T)} \bar p_i^k x^k_i - C^k(T)  \qquad \forall\; T \in \mathcal{T}, \; k \in K, 
\end{equation}
obtaining the following single-level reformulation of problem~\eqref{eq:routing_single1}:
\begin{subequations}\label{eq:routing_single}
	\begin{empheq}[left=(\mathsf{\textcolor{OliveGreen}{BPFM}})\empheqlbrace]{align}
    	\max\limits_{{x},{y},{z}} & \quad \sum_{i \in V} \sum_{k \in K} \left(p_i - \bar p^k_{i}\right) y^k_i&&  \label{eq:routing_single-of}\\
		\text{s.t.} & \quad \sum_{k \in K} x^k_i \leq 1 && \forall\; i \in V \\
        & \quad \sum_{i \in V} x^k_i \leq b^k && \forall\; k \in K \label{eq:routing_single-budget}\\
        & \quad \sum_{i \in V}  \bar p^k_{i}  y^k_i - \sum_{(i,j) \in A} c^k_{ij} z^k_{ij} \geq \sum_{i \in V(T)} \bar p_i^k x^k_i - C^k(T) && \forall\; T \in \mathcal{T}, \; k \in K \label{eq:routing_single-valuefunction}  \\
		& \quad y^k_{i}  \leq  x^k_i && \forall\; i \in V,\; k \in K  \label{eq:routing_single-interdiction}\\
		& \quad ({y}^k,{z}^k) \text{ is a route} && \forall\; k \in K \label{eq:routing_single-route}\\
& \quad {x}^k\in \{0,1\}^{n},{y}^k\in \{0,1\}^{n+1},{z}^k\in \{0,1\}^{|A|} && \forall\; k \in K,\label{eq:routing_single-sets}
    \end{empheq}
\end{subequations}
to which we will refer in the rest of the paper when considering the BPFM.

\subsection{Relationship with WTA-PTP and UCC-PTP formulations}\label{sub:comparison-single}
The $(\mathsf{\textcolor{OliveGreen}{WTA\text{-}PTP}})$ formulation and $(\mathsf{\textcolor{OliveGreen}{UCC\text{-}PTP}})$ formulation are respectively a relaxation and a restriction of the BPFM formulation. 
On the one hand, the WTA-PTP formulation~\eqref{eq:wta-ptp} can be obtained from formulation~\eqref{eq:routing_single} by setting ${x}={y}={\alpha}$ and replacing constraints~\eqref{eq:routing_single-valuefunction} with the weaker constraints~\eqref{eq:wta-ptp:wta}. 
On the other hand, to see the relationship with the UCC-PTP formulation, we set ${x}={\alpha}$, which gives us a valid partitioning of the items. Then, we set ${y}={\alpha}$ as a follower response. In order to prove that $({x},{y})$ is a feasible bilevel solution (and thus provides a lower bound for BPFM), it remains to prove that ${y}$ is the optimal followers' response for the given ${x}={\alpha}$. This follows from the model~\eqref{eq:ucc-ptp} itself because, once ${\alpha}$ is fixed, model~\eqref{eq:ucc-ptp} separates into $|K|$ independent subproblems, each of them corresponding to the lower-level problem~\eqref{eq:routingLL} for a given ${x}={\alpha}$. 

\subsection{\texorpdfstring{Projecting out the \boldmath{$z$} variables}{Projecting out the z variables}}\label{sub:proj}
In this section, we present a new formulation for the BPFM derived from projecting out the ${z}$ variables in the value-function reformulation presented above. We introduce the new continuous variables $\theta^k$, for each $k\in K$, which represent the cost of the route followed by carrier $k$. In this case, problem~$(\mathsf{\textcolor{OliveGreen}{BPFM}})$ becomes
\begin{subequations}\label{eq:routing-proj}
    \begin{empheq}[left=\hspace*{-0.5cm}(\mathsf{\textcolor{OliveGreen}{BPFM\text{-}z}})\empheqlbrace]{align}
        \max\limits_{{x}, {y}, {\theta}} & \quad \sum_{i \in V} \sum_{k \in K} \left(p_i - \bar p^k_{i}\right) y^k_i&&  \label{eq:routing-proj-of}\\
        \text{s.t.} & \quad \sum_{k\in K} x^k_{i} \leq 1 && \forall\; i \in V \label{eq:routing-proj-partition}\\
        & \quad {\sum_{i\in V} x^k_{i} \leq b^k} && \forall\; k \in K \label{eq:routing-proj-budget} \\
        & \quad \sum_{i \in V}  \bar p^k_{i} y^k_i  - \theta^k \geq \varphi^k({x}^k) && \forall\; k \in K \label{eq:routing-proj-valuefunction}  \\
        & \quad \theta^k \geq c^k_{\mathsf{TSP}}({y}^k) &&\forall\;k \in K \label{eq:routing-proj-theta}\\
        & \quad y^k_{i}  \leq  x^k_i && \forall\; i \in V, k \in K  \label{eq:routing-proj-interdict}\\
        &  \quad {x}^k \in \{0,1\}^n, {y}^k\in \{0,1\}^{n+1},\theta^k \in \mathbb{R} && \forall\; k \in K\label{eq:routing-proj-sets}
    \end{empheq}
\end{subequations}

where $c^k_{\mathsf{TSP}}({y}^k)$ is the cost of the optimal route associated with vector ${y}^k$ (TSP standing for Travelling Salesman Problem), and $\varphi^k({x}^k)$ is the optimal solution value of the $k$-th follower problem, which, for a given $(\tilde{{x}}^k)$ is formulated as in~\eqref{eq:routingLL}.

According to Proposition~\ref{prop}, constraints~\eqref{eq:routing-proj-valuefunction} can be replaced by:
\begin{subequations}
\begin{equation}\label{eq:routing-proj-cuts}
    \sum_{i \in V} \bar p_i^k y_i^k - \theta^k  \geq \sum_{i \in V(T)} \bar p_i^k x^k_i - C^k(T) \quad \forall\;T \in \mathcal{T}, k \in K,
\end{equation}
whereas constraints~\eqref{eq:routing-proj-theta} can be replaced by the cuts:
\begin{equation} \label{eq:nogood-routing}
    \theta^k  \geq c^k_{\mathsf{TSP}}(V(T)) \left[\sum_{i \in V(T)} y^k_i - |V(T)| + 1\right] \quad \forall\; T \in \mathcal{T}, k \in K.
\end{equation}
\end{subequations}

Additionally, we can add to the formulation the following strengthening inequalities for all $k \in K$ in order to provide a non-trivial lower bound on the value of variables $\theta^k$:
\begin{equation}\label{eq:thetabound}
    \theta^k \geq \sum_{i \in V} d_i^k y_i^k,
\end{equation}
where $d_i^k=\max\{\min\limits_{j \in \delta^-(i)}c^k_{ji},\min\limits_{j \in \delta^+(i)}c^k_{ij}\}$. These inequalities state that the cost of the route associated with ${y}^k$ cannot be lower than the sum of the costs of the arcs of minimum cost incident to all the visited nodes.

\subsection{\texorpdfstring{Bounded route duration variant}{Bounded route duration variant}}\label{sub:tmax}
In the setting considered above, each carrier cannot deliver more than $b^k$ packages (constraints~\eqref{eq:wta-ptp:budget}, \eqref{eq:ucc-ptp:budget}, \eqref{eq:routingLL-budget}, \eqref{eq:routing_single-budget}, \eqref{eq:routing-proj-budget}). Another common practical setting is the one where the limitation is instead imposed on route duration, thus having a time limit $t_{max}^k$. To model this case, we remove the constraints on the maximum number of packages and we add the following constraint:

\begin{equation}\label{eq:pricing-tmax}
	\quad \sum\limits_{(i,j)\in A} c^k_{ij} z_{ij}^k \leq t_{max}^k
\end{equation}
in each lower level. The considerations on the bilevel nature of the problem also apply in this case.

The formulation presented in Section~\ref{sub:proj}, where ${z}$ variables are projected out, has to include constraint
\begin{equation}\label{eq:proj-tmax} 
    \theta^k  \leq t_{max}^k \quad \forall k \in K
\end{equation}
in the single-level formulations, and constraint~\eqref{eq:pricing-tmax} in each lower level.

\subsection{Reformulation based on no-good cuts}\label{sub:BPFM-nogood}
An alternative to the value function approach for obtaining a single-level reformulation of model~\eqref{eq:routing} consists in introducing no-good cuts based on the solution of the PTPs associated with the carriers.

Consider a set of items $\mathcal{V}^k$, such that, when they are offered to carrier $k$ ($\alpha_i^k = 1$ for all $i \in \mathcal{V}^k$), not all are accepted. In other words, there exists a proper subset of items within $\mathcal{V}^k$ that constitutes an optimal PTP solution for carrier $k$.
Using the ${\alpha}$ variables introduced for the WTA-PTP formulation~\eqref{eq:wta-ptp}, we can obtain the following reformulation of BPFM with no-good cuts:
\begin{subequations}\label{eq:BPFM2}
	\begin{empheq}[left=(\mathsf{\textcolor{OliveGreen}{BPFM'}})\empheqlbrace]{align}
    	\max\limits_{{\alpha}} & \; \sum_{i \in V} \sum_{k \in K} \left(p_i - \bar p^k_{i}\right) \alpha^k_i&&  \\
		\text{s.t.} & \; \sum_{k \in K} \alpha^k_i \leq 1 && \forall\; i \in V \\
        & \; \sum_{i \in V} \alpha^k_i \leq b^k && \forall\; k \in K \label{eq:BPFM2:budget} \\
        & \; \sum_{i \in \mathcal{V}^k}  (1 - \alpha^k_i) +  \sum_{i \not \in \mathcal{V}^k} \alpha^k_{i} \geq 1 && \forall\; k \in K, \; \mathcal{V}^k \subset V \label{eq:BPFM2:nogood} \\
		& \; {\alpha}^k\in \{0,1\}^{n} && \forall\; k \in K.
    \end{empheq}
\end{subequations}
Constraints~\eqref{eq:BPFM2:nogood} are the no-good cuts and they are exponentially many. Note that for any given subset of items whose incidence vector is given by ${\alpha}^k$, in order to determine whether it coincides with a $k$-th carrier's optimal response, one has to solve the lower-level PTP problem~\eqref{eq:routingLL}, obtaining the set of accepted parcels ${y}^k$. If the vectors ${\alpha}^k$ and ${y}^k$ do not coincide, the no-good cut has to be added to the model. Hence, this is an alternative way of restating the bilevel problem as a single-level reformulation. However, it is worth noting that the no-good cuts are known to be weak, only cutting off one point at a time.

\section{The Bilevel PTP with Margin Decisions}\label{sec:variable_margins}
In this section, we turn our attention to the BPMD and present two different formulations for this problem in Section~\ref{sub:agg} and Section~\ref{sub:disagg}, respectively. In Section~\ref{sub:comparison}, we compare these two bilevel formulations, referring to their value function reformulations.

As already discussed, in the BPMD we assume that, in addition to assignment decisions, the leader decides also the margin $m \in M$ to gain from each item~$i$ delivered by carrier~$k$.

Let $m_{\text{min}}$ and $m_{\text{max}}$ be the minimum and maximum margin, respectively. In order to model the leader's choice among the different margins $m \in M$, we have considered the following two alternative options:
\begin{itemize}
    \item[$i)$] 
    We define a new upper-level binary variable for each carrier, margin, and item, $X_{mi}^k$, which takes value $1$ if $i\in P_k$ and the selected margin is $m$ for item $i\in V$, and $0$ otherwise. As $\sum\limits_{m \in M} X_{mi}^k = x_i^k$ for all $i \in V$ and $k \in K$, we can discard the variables ${x}$. However, we still use variables ${y}$ as defined for the BPFM.
    \item[$ii)$] 
    We define also the lower-level binary \textit{disaggregated} variable $Y_{mi}^k$, which substitutes the previous decision variable $y^k_i$, and takes value $1$ if carrier $k$ accepts to serve customer~$i$ (i.e., $i \in Q_k$) with margin level~$m$. In particular $Y^k_{m0} = 1$ for an arbitrarily selected $m$ if carrier $k$ accepts to make at least one delivery.
\end{itemize}

In both cases, we model the routing decisions with the same binary variables ${z}$ as defined for the BPFM. The decision variables of alternative $i)$ lead to what we call the \textit{aggregated formulation}, in contrast to alternative $ii)$ which leads to a \textit{disaggregated formulation}.
We prove in Section~\ref{sub:comparison} that these two formulations provide the same bounds.

Before presenting the formulations we observe that if the selected margin is $m$, the corresponding net profit for the leader will be $p_{mi}y_i^k$, where $p_{mi} = m \cdot p_i$. Furthermore, in this context, we denote by $\bar p_{mi} = p_i-p_{mi}$ the compensation paid to follower $k$ when the selected margin is $m$.

\subsection{Aggregated formulation}\label{sub:agg}

Using the decision variables of alternative $i)$, the upper-level variable $x_i^k$ can be replaced by $\sum\limits_{m \in M} X_{mi}^k.$
Furthermore, the upper-level objective function, representing the profit of the leader to be maximized, reads
$$\sum_{k \in K} \sum_{i \in V}\sum_{m \in M} p_{mi} X_{mi}^k y_i^k.$$
We can linearize this function by using Fortet's inequalities \citep{fortet1960applications}, i.e., a special case of the McCormick inequalities for products of binary variables. These inequalities define the convex envelopes of the bilinear terms $X^k_{mi} y^k_i$. In order to do this, we have to introduce additional binary variables $w_{mi}^k$ defined as $X_{mi}^k y^k_i$ and insert the Fortet's inequalities~\eqref{eq:pricing1-mc1}--\eqref{eq:pricing1-mc3} in the upper-level problem formulation obtaining the following bilevel problem:
\begin{subequations}\label{eq:pricing1}
    \begin{align}
        \max\limits_{{X},{w},{y}} &  \quad \sum_{k \in K} \sum_{i \in V} \sum_{m \in M} p_{mi} w_{mi}^k&&  \label{eq:pricing1-of}\\
        \text{s.t.} & \quad  \sum_{k\in K}\sum_{m \in M} X^k_{mi} \leq 1 \qquad && \forall\; i \in V \label{eq:pricing1-partition}\\
        & \quad X_{mi}^k + y^k_i \le w_{mi}^k + 1 && \forall\; m \in M, i \in  V, k \in K \label{eq:pricing1-mc1} \\
        & \quad  w_{mi}^k \le X_{mi}^k   && \forall\; m \in M, i \in  V, k \in K \label{eq:pricing1-mc2}\\
        & \quad  w_{mi}^k  \le y^k_i  && \forall\; m \in M, i \in  V, k \in K \label{eq:pricing1-mc3}\\
        &  \quad {y}^k \in S^k_{\Phi}({X}^k) && \forall\; k \in K \label{eq:pricing1-valuefunction}\\
        & \quad {X}_m^k, {w}_m^k\in \{0,1\}^{n} && \forall\; m \in M, k \in K\\
        & \quad {y}^k \in \{0,1\}^{n+1} && \forall\; k \in K
    \end{align}
\end{subequations}
with $S^k_{\Phi}({X}^k)$ the set of optimal solutions of the $k$-th follower problem, which, for a given $\tilde{{X}}^k$, reads:
\begin{subequations}\label{eq:pricing1LL}
    \begin{align}
        \Phi^k(\tilde{{X}}^k) = \; \max\limits_{{y},{z}} & \;\sum_{i \in V} \sum_{m \in M}\bar p_{mi} \tilde X_{mi}^k y^k_i - \sum_{(i,j) \in A} c^k_{ij} z^k_{ij} &&  \label{eq:pricing1LL-of}\\
        \text{s.t.} & \quad  y^k_{i} \leq \sum_{m \in M}\tilde X^k_{mi} &&\forall\; i \in V  \label{eq:pricing1LL-interdiction}\\
        & \quad \sum_{i\in V} y^k_{i}\leq b^k && \label{eq:pricing1LL-budget} \\
        & \quad ({y}^k,{z}^k)  \text{ is a route} && \label{eq:pricing1LL-route}\\
        & \quad {y}^k\in \{0,1\}^{n+1},{z}^k\in \{0,1\}^{|A|}. && \label{eq:pricing1LL-sets}
    \end{align}
\end{subequations} 

We note that, if $\sum\limits_{m \in M} X_{mi}^k = 0,$ then $X_{mi}^k=0$ for all $m$, and hence $y_i^k$ will be $0$ (from constraints~\eqref{eq:pricing1LL-interdiction}), so the upper-level objective function, as well as the first term of the lower-level objective function, will also be 0. Thus, we can replace constraints~\eqref{eq:pricing1-mc1} by: 
\begin{subequations}
\begin{equation}\label{eq:pricing1-mc1bis}
    y^k_i \le w_{mi}^k + \sum_{u \in M:u \neq m} X_{ui}^k  \quad \quad \forall\;m \in M, i \in  V, k \in K.
\end{equation}

Furthermore, to strengthen the Linear Programming (LP) relaxation of the problem, we add the following constraint, implicitly satisfied for binary solutions, but not necessarily for LP solutions:
\begin{equation}\label{eq:pricing1-yw}
    y^k_i = \sum_{m \in M} w_{mi}^k  \quad \quad \forall\;i \in  V, k \in K.
\end{equation}
\end{subequations}
\indent A single-level reformulation of the proposed bilevel problem may be obtained using the value function reformulation. Also in this case, we restrict ${x}$ instead of ${y}$ when moving constraints~\eqref{eq:pricing1LL-budget} to the upper level, i.e., in the single-level value function reformulation we have constraints 
\begin{equation}\label{eq:x_bound}
    \sum_{i\in V} \sum_{m \in M}X_{mi}^k \leq~b^k \quad \forall \; k \in K.
\end{equation}
As in Section~\ref{sec:fixed_margins}, assuming parameters $c^k_{ij}$ satisfy the triangle inequality, the following result holds.
		
\begin{prop}\label{prop2}
    For any $k\in K$, given a vector $\tilde{{X}}^k \in \{0,1\}^{n}$ satisfying constraints~\eqref{eq:pricing1-partition}--\eqref{eq:pricing1-mc3}, and \eqref{eq:x_bound}, there always exists an optimal solution of the following problem, which is also optimal for $\Phi^k(\tilde{{X}}^k)$:
    \begin{align}\label{subeq:problem}
        \bar \Phi^k(\tilde{{X}}^k) = \quad \max\limits_{T\in \mathcal{T}} \left\{ \sum_{i \in V(T)} \sum_{m \in M}\bar p_{mi}\tilde X_{mi}^k - C^k(T)\right\}.
    \end{align}
\end{prop}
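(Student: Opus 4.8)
The plan is to follow the same line of reasoning used in the proof of Proposition~\ref{prop}, adapting the triangle-inequality shortcutting argument to the disaggregated margin variables. First I would fix a carrier $k \in K$ and, observing that the partition constraints~\eqref{eq:pricing1-partition} force $\sum_{m \in M}\tilde X_{mi}^k \in \{0,1\}$ for every $i \in V$, I would define $P_k = \{i \in V : \sum_{m \in M}\tilde X_{mi}^k = 1\}$ as the set of items effectively offered to carrier $k$, each with a single associated margin. For $i \in P_k$ with assigned margin $m_i$, the coefficient $\sum_{m \in M}\bar p_{mi}\tilde X_{mi}^k$ reduces to the single compensation $\bar p_{m_i i}$, whereas for $i \notin P_k$ it equals $0$. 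This identifies the contribution of each visited vertex in the route objective of~\eqref{subeq:problem} with its true compensation when $i \in P_k$, and with zero otherwise.

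Next I would establish the equivalence in two directions. For the easy direction, any feasible $(y^k,z^k)$ of $\Phi^k(\tilde X^k)$ satisfies the interdiction constraints~\eqref{eq:pricing1LL-interdiction}, so $y_i^k = 0$ whenever $i \notin P_k$; together with the routing equalities~\eqref{eq:route1}--\eqref{eq:route2} this forces the associated route to visit only vertices of $P_k$, and its objective value coincides with the bracketed expression of~\eqref{subeq:problem} evaluated on that route. Hence every feasible solution of $\Phi^k(\tilde X^k)$ maps to a route in $\mathcal{T}$ of equal value, giving $\bar\Phi^k(\tilde X^k) \ge \Phi^k(\tilde X^k)$. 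For the reverse direction, I would take an optimal route $T^\star$ attaining $\bar\Phi^k(\tilde X^k)$ and argue, exactly as in Proposition~\ref{prop}, that it can be assumed to visit no vertex outside $P_k$: if some $i' \in V(T^\star)\setminus P_k$ were visited it would contribute zero compensation, and by the triangle inequality on the weights $c_{ij}^k$ the shortcut from the predecessor to the successor of $i'$ would not increase the route cost. Iterating this removal yields an optimal route $T'$ with $V(T') \subseteq P_k$ and no smaller objective value. Setting $y_i^k = 1$ iff $i \in V(T')$ and letting $z^k$ be the incidence vector of $T'$ then produces a solution feasible for $\Phi^k(\tilde X^k)$ (the interdiction constraints hold since $V(T') \subseteq P_k$) that attains the value $\bar\Phi^k(\tilde X^k)$, so it lies in $S^k_{\Phi}(\tilde X^k)$.

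I expect the only real subtlety to be bookkeeping rather than mathematical depth: one must verify that replacing the aggregated assignment by the single-coefficient compensation $\sum_{m \in M}\bar p_{mi}\tilde X_{mi}^k$ is consistent, that the partition constraint indeed rules out two margins being selected for the same $(i,k)$ pair, and that $\mathcal{T}$ is understood to contain the trivial empty route so that the ``serve nothing'' option remains available when all compensations are dominated by routing costs. With these points in place, the shortcutting step carries over verbatim from Proposition~\ref{prop}, and the two inequalities combine to give $\bar\Phi^k(\tilde X^k) = \Phi^k(\tilde X^k)$ together with a common optimal solution, which is precisely the claim.
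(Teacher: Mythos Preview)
Your proposal is correct and follows essentially the same triangle-inequality shortcutting argument as the paper's own proof. The paper's version is terser---it only spells out the nontrivial direction (that an optimal route for $\bar\Phi^k$ can be assumed to avoid all vertices with $\sum_{m}\tilde X^k_{mi}=0$), relying implicitly on the fact that $\bar\Phi^k$ is a relaxation of $\Phi^k$ for the other inequality---whereas you make both directions explicit and add the bookkeeping about $P_k$ and the single selected margin, but the mathematical content is the same.
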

\begin{proof}
Problem~\eqref{subeq:problem} is obtained from \eqref{eq:pricing1LL} by relaxing constraints~\eqref{eq:pricing1LL-interdiction} and \eqref{eq:pricing1LL-budget}.
When, for a given $i' \in V$, $\sum\limits_{m \in M}\tilde X^k_{mi'} = 1$, constraint~\eqref{eq:pricing1LL-interdiction} is implicitly satisfied, being the components of ${y}$ at most 1. Thus, being $S_{\bar \Phi}^k(\tilde{{X}}^k)$ the set of optimal solutions of problem~${\bar \Phi}^k(\tilde{{X}}^k)$, we want to prove that, if, for a given $i'$, $\sum\limits_{m \in M}\tilde X^k_{mi'} = 0$ there exists $(\hat{{y}}^k, \hat{{z}}^k) \in S_{\bar \Phi}^k(\tilde{{X}}^k)$ such that $\hat y^k_{i'}=0$. Assuming that $\hat y^k_{i'}=1$, the compensation collected from $i'$ would be 0 as $\tilde X^k_{mi'}=0$ for all $m$. Also, for the triangle inequality, going directly from the predecessor to the successor of $i'$ in the optimal solution is cheaper (or at most has the same cost) than going through $i'$. Thus, either the solution visiting $i'$ is not optimal, or there exists a solution that does not visit $i'$ with the same value of the objective function. Straightforwardly, as constraints~\eqref{eq:x_bound} hold for $\tilde{{X}}^k$, constraint~\eqref{eq:pricing1LL-budget} will hold for $\hat{{y}}^k$.
\end{proof}
Following the same approach as in Proposition \ref{prop}, we have the following single-level reformulation:
{\small\begin{subequations}\label{eq:pricing1_single}
   \begin{empheq}[left=\hspace*{-1.5cm}(\mathsf{\textcolor{OliveGreen}{BPMD}})\empheqlbrace]{align}
   \max\limits_{{X},{w},{y},{z}} &  \quad \sum_{k \in K} \sum_{i \in V} \sum_{m \in M} p_{mi} w_{mi}^k &&  \label{eq:pricing1_single-of}\\
        \text{s.t.} & \quad \eqref{eq:pricing1-partition}, \eqref{eq:pricing1-mc1bis}\text{--}\eqref{eq:pricing1-yw}, \eqref{eq:pricing1-mc2}\text{--}\eqref{eq:pricing1-mc3} \nonumber \\
        & \quad {\sum_{i\in V} \sum_{m \in M}X_{mi}^k\leq b^k} && \forall\; k \in K \label{eq:pricing1_single-budget} \\
        & \quad y_i^k \leq \sum_{m \in M}X_{mi}^k && \forall\;i \in  V, k \in K \label{eq:pricing1_single-interdiction}\\
        & \quad ({y}^k,{z}^k)  \text{ is a route} && \forall\;k \in K \label{eq:pricing1_single-route}\\
        & \sum_{i \in V} \sum_{m \in M} \bar p_{mi}w_{mi}^k - \sum_{(i,j) \in A}c^k_{ij} z^k_{ij} \geq &&\nonumber\\
        & \qquad \qquad \qquad \sum_{i \in V(T)} \sum_{m \in M}\bar p_{mi}X_{mi}^k - C^k(T) && \forall\; T \in \mathcal{T}, k \in K \label{eq:pricing1_single-valuefunction}\\
        & \quad  {X}_m^k, {w}_m^k \in \{0,1\}^{n} && \forall\;m \in M,k \in K\\
        & \quad {y}^k \in \{0,1\}^{n+1},  {z}^k \in \{0,1\}^{|A|} && \forall\; k \in K \label{eq:pricing1_single-sets}
    \end{empheq}
\end{subequations}}
where constraints~\eqref{eq:pricing1_single-valuefunction} replace the value function constraints
\begin{equation*}
    \sum_{i \in V} \sum_{m \in M} \bar p_{mi} w_{mi}^k - \sum_{(i,j) \in A} c^k_{ij} z^k_{ij}  \geq \Phi^k({X}^k) \quad \forall\;k \in K.
\end{equation*}

In the same way as we did in Section~\ref{sub:proj} for BPFM, by introducing the new real variable $\theta^k$, constraints~\eqref{eq:pricing1_single-route} and~\eqref{eq:pricing1_single-valuefunction} in $(\mathsf{\textcolor{OliveGreen}{BPMD}})$ can be replaced by \eqref{eq:nogood-routing} and
\begin{align}
    \sum_{i \in V} \sum_{m \in M}\bar p_{mi} w_{mi}^k - \theta^k  \geq \sum_{i \in V(T)} \sum_{m \in M}\bar p_{mi}X_{mi}^k - C^k(T) &  \quad\forall\;T \in \mathcal{T}, k\in K,  \label{eq:pricing1-proj-valuefunction}
\end{align}
respectively, obtaining the new single-level formulation $(\mathsf{\textcolor{OliveGreen}{BPMD\text{-}z}})$, which is reported in Online Appendix~B. 

Furthermore, also for BPMD, as discussed for BPFM in Section~\ref{sub:tmax}, we can consider a route duration limit instead of a bound on the number of packages to serve, by replacing constraints~\eqref{eq:pricing1_single-budget} with constraints~\eqref{eq:pricing-tmax} for $(\mathsf{\textcolor{OliveGreen}{BPMD}})$, or with constraints~\eqref{eq:proj-tmax} for $(\mathsf{\textcolor{OliveGreen}{BPMD\text{-}z}})$.

\subsection{Disaggregated formulation}\label{sub:disagg}
Using the disaggregated decision variables $Y_{mi}^k$ of alternative $ii)$ we obtain the following formulation:
\begin{subequations}\label{eq:pricing2}
    \begin{align}
        \max\limits_{{X},{Y}} &  \quad \sum_{k \in K} \sum_{i \in V} \sum_{m \in M}\left(p_{mi}Y^k_{mi} \right)&&  \label{eq:pricing2-of}\\
        \text{s.t.} & \quad  \sum_{k\in K} \sum_{m \in M} X^k_{mi} \leq 1 \qquad && \forall\; i \in  V \label{eq:pricing2-partition}\\
        &  \quad {Y}^k \in S^k_{\Psi}({X}^k) && \forall\; k \in K \label{eq:pricing2-valuefunction}\\
        & \quad {X}_m^k \in \{0,1\}^{n}, {Y}_m^k \in \{0,1\}^{n+1} && \forall\;m \in M,k \in K, \label{eq:pricing2-sets}
    \end{align}
\end{subequations}
where $S^k_{\Psi}({X}^k)$ is the set of optimal solutions of the $k$-th follower problem, which, for a given $\tilde{{X}}^k$, reads:
\begin{subequations}\label{eq:pricing2LL}
    \begin{align}
        \Psi^k(\tilde{{X}}^k) = \;  \max\limits_{{Y},{z}} &  \;\sum_{i \in V} \sum_{m \in M} \bar p_{mi} Y^k_{mi}- \sum_{(i,j) \in A} c^k_{ij} z^k_{ij} &&  \label{eq:pricing2LL-of}\\
        \text{s.t.} & \quad  Y^k_{mi} \leq \tilde X^k_{mi} &&\forall\; i \in V, m\in M  \label{eq:pricing2LL-interdiction}\\
        & \quad \sum_{i\in V} \sum_{m \in M} Y^k_{mi} \leq b^k &&  \label{eq:pricing2LL-budget} \\
        & \quad (\sum_{m \in M} {Y}^k_{m},{z}^k)  \text{ is a route} &&  \label{eq:pricing2LL-route}\\
        & \quad {Y}_m^k \in \{0,1\}^{n+1}, {z}^k \in \{0,1\}^{|A|} && \forall\;m \in M.
    \end{align}
\end{subequations}
In this formulation, thanks to the disaggregated variables $Y_{mi}^k$, there is no bilinear product to linearize.
In this case, a route $T \in \mathcal{T}$ corresponds to the pair $(\sum\limits_{m \in M} {Y}^k_{m},{z}^k)$. 
As before, under the assumption that the costs $c^k_{ij}$ satisfy the triangle inequality, the following result holds.
\begin{prop}\label{prop3}
    For any $k\in K$, given a vector $\tilde{{X}}^k \in \{0,1\}^{n}$ satisfying constraints~\eqref{eq:pricing2-partition}, and such that $\sum\limits_{i\in V}\sum\limits_{m \in M} \tilde X^k_{mi}\leq~b^k, \; \forall \; k \in K$, there always exists an optimal solution of the following problem, which is also optimal for $\Psi^k(\tilde{{X}}^k)$:
    \begin{subequations}
        \begin{align}
            \bar \Psi^k(\tilde{{X}}^k) = \; \max\limits_{T \in \mathcal{T}} \left\{\sum_{i \in V(T)}\sum_{m \in M} \bar p_{mi}\tilde X^k_{mi} - C^k(T) \right\}.
        \end{align}
    \end{subequations}
\end{prop}

Note that the proof of Proposition~\ref{prop3} is similar to the one of Proposition~\ref{prop}, so it is omitted.

As for BPFM, we can obtain single-level reformulations of formulation~\eqref{eq:pricing2} through the value function approach or the no-good cuts approach. The value function reformulation of~\eqref{eq:pricing2} reads:

\begin{subequations}\label{eq:pricing2_single}
    \begin{empheq}[left=\hspace*{-1.2cm}(\mathsf{\textcolor{OliveGreen}{BPMD_d}})\empheqlbrace]{align}
        \max\limits_{{X},{Y},{z}} &  \quad \sum_{k \in K} \sum_{i \in V} \sum_{m \in M}p_{mi}Y^k_{mi} &&  \\
        \text{s.t.} &  \;  \sum_{k\in K} \sum_{m \in M} X^k_{mi} \leq 1 && \forall\; i \in  V \label{eq:pricing2_single-partition}\\
        & \;  {\sum_{i\in V} \sum_{m \in M} X^k_{mi} \leq b^k } && \forall\; k \in  K \label{eq:pricing2_single-budget}\\
        &  \;  Y^k_{mi} \leq X^k_{mi} &&\forall\; i \in V, m\in M, k \in K \label{eq:pricing2_single-interdiction}\\
        & \; (\sum_{m \in M} {Y}^k_{m},{z}^k)  \text{ is a route} && \forall\; k \in K \label{eq:pricing2_single-route}\\
        & \hspace*{-4.5mm}\sum_{i \in V} \sum_{m \in M} \bar p_{mi} Y^k_{mi}- \sum_{(i,j) \in A} c^k_{ij} z^k_{ij} \geq && \nonumber\\
        & \qquad \qquad \qquad \sum_{i \in V(T)}\sum_{m \in M} \bar p_{mi}\tilde X^k_{mi} - C^k(T)  && \forall\; T \in \mathcal{T}, k \in K \label{eq:pricing2_single-valuefunction}\\
        & \; {X}_m^k \in \{0,1\}^n, {Y}_m^k \in \{0,1\}^{n+1} && \forall\;m \in M,k \in K\\
        & \; {z}^k \in \{0,1\}^{|A|} && \forall k \in K,
    \end{empheq}
\end{subequations}

where constraints~\eqref{eq:pricing2_single-valuefunction} reformulate, according to Proposition~\ref{prop3}, the value function constraints
\begin{equation*}
    \sum_{i \in V} \sum_{m \in M} \bar p_{mi} Y^k_{mi}- \sum_{(i,j) \in A} c^k_{ij} z^k_{ij} \geq \Psi^k({X}^k) \quad \forall\; k\in K.
\end{equation*}

We can alternatively formulate the BPMD problem through the no-good cuts approach, obtaining formulation $(\mathsf{\textcolor{OliveGreen}{BPMD'}}),$ presented and discussed in Online Appendix~C.

An equivalent single-level formulation, which we call $(\mathsf{\textcolor{OliveGreen}{BPMD_d\text{-}z}})$, can be obtained by projecting out the ${z}$ variables and introducing the new variables $\theta^k$, replacing constraints~\eqref{eq:pricing2_single-route} and~\eqref{eq:pricing2_single-valuefunction} in $(\mathsf{\textcolor{OliveGreen}{BPMD_d}})$ by
\begin{subequations}\label{eq:pricing2-proj}
    \begin{align}
        & \theta^k \geq c^k_{\mathsf{TSP}}(V(T)) \left[\sum_{i \in V(T)} \sum_{m \in M} Y^k_{mi} - |V(T)| + 1\right]  &\forall\;T \in \mathcal{T}, k \in K, \label{eq:pricing2-proj-gamma} \\
        &\sum_{i \in V} \sum_{m \in M} \bar p_{mi}Y^k_{mi}- \theta^k \geq \sum_{i \in V(T)}\sum_{m \in M} \bar p_{mi}\tilde X^k_{mi} - C^k(T) &  \forall\;T \in \mathcal{T}, k \in K, \label{eq:pricing2-proj-valuefunction}
    \end{align}
\end{subequations}
respectively. The obtained formulation $(\mathsf{\textcolor{OliveGreen}{BPMD_d\text{-}z}})$ is reported in Online Appendix~B. 

Furthermore, if a route duration limit has to be considered instead of the capacity constraint, constraints~\eqref{eq:pricing2_single-budget} are replaced by constraints~\eqref{eq:pricing-tmax} for $(\mathsf{\textcolor{OliveGreen}{BPMD_d}})$, or by constraints~\eqref{eq:proj-tmax} for $(\mathsf{\textcolor{OliveGreen}{BPMD_d\text{-}z}})$.

\subsection{Comparing the BPMD formulations}\label{sub:comparison}
In this section, we compare the two BPMD value function formulations, $(\mathsf{\textcolor{OliveGreen}{BPMD}})$ and $(\mathsf{\textcolor{OliveGreen}{BPMD_d}})$, in terms of the value of their linear relaxations. In the following theorem, we prove that the aggregated formulation~$(\mathsf{\textcolor{OliveGreen}{BPMD}})$ is as strong as formulation~$(\mathsf{\textcolor{OliveGreen}{BPMD_d}})$ (i.e., its linear relaxation provides equivalent upper bounds). Let us define $v_{LP}(\mathsf{\textcolor{OliveGreen}{BPMD}})$ and $ v_{LP}(\mathsf{\textcolor{OliveGreen}{BPMD_d}})$ as the optimal values of the LP relaxation of $(\mathsf{\textcolor{OliveGreen}{BPMD}})$ and $(\mathsf{\textcolor{OliveGreen}{BPMD_d}})$, respectively.

\begin{thm}\label{theorem}
Any LP feasible solution $(\tilde{{X}},\tilde{{y}},\tilde{{w}},\tilde{{z}})$ of the model~$(\mathsf{\textcolor{OliveGreen}{BPMD}})$ can be translated into a LP feasible solution $(\tilde{{X}},\tilde{{Y}},\tilde{{z}})$ of the model~$(\mathsf{\textcolor{OliveGreen}{BPMD_d}})$ by imposing that, for all $i \in V, m \in M, k \in K$:
\begin{equation}\label{eq:Yw}
    \tilde Y_{mi}^k =\tilde w^k_{mi}, 
\end{equation}
and vice versa. Hence $v_{LP}(\mathsf{\textcolor{OliveGreen}{BPMD}}) = v_{LP}(\mathsf{\textcolor{OliveGreen}{BPMD_d}})$. 
\end{thm}
\begin{proof}
    Let us start by proving that $v_{LP}(\mathsf{\textcolor{OliveGreen}{BPMD}}) \leq v_{LP}(\mathsf{\textcolor{OliveGreen}{BPMD_d}})$, i.e., that any LP solution $(\tilde{{X}},\tilde{{y}},\tilde{{w}},\tilde{{z}})$ of~$(\mathsf{\textcolor{OliveGreen}{BPMD}})$ is also feasible in $(\mathsf{\textcolor{OliveGreen}{BPMD_d}})$, when imposing Eq.~\eqref{eq:Yw}. Indeed, constraints~\eqref{eq:pricing2_single-partition} and \eqref{eq:pricing2_single-budget} are constraints~\eqref{eq:pricing1-partition} and \eqref{eq:pricing1_single-budget}, respectively. Constraints~\eqref{eq:pricing2_single-interdiction} correspond to \eqref{eq:pricing1-mc2}. Constraints~\eqref{eq:pricing2_single-valuefunction} are equivalent to constraints~\eqref{eq:pricing1_single-valuefunction}. Finally, constraints~\eqref{eq:pricing2_single-route} correspond to \eqref{eq:pricing1_single-route} combined with constraints~\eqref{eq:pricing1-yw}.
    Hence the inequality $v_{LP}(\mathsf{\textcolor{OliveGreen}{BPMD}}) \leq v_{LP}(\mathsf{\textcolor{OliveGreen}{BPMD_d}})$ holds.
    Let us now consider an LP solution $(\tilde{{X}},\tilde{{Y}},\tilde{{z}})$ of~$(\mathsf{\textcolor{OliveGreen}{BPMD_d}})$. We need to show that it is LP feasible also for $(\mathsf{\textcolor{OliveGreen}{BPMD}})$ if Eq.~\eqref{eq:Yw} is imposed. Indeed, as already said, constraints~\eqref{eq:pricing1-partition}, \eqref{eq:pricing1-mc2}, \eqref{eq:pricing1_single-budget}, and \eqref{eq:pricing1_single-valuefunction} correspond to \eqref{eq:pricing2-partition}, \eqref{eq:pricing2_single-interdiction}, \eqref{eq:pricing2_single-budget}, and \eqref{eq:pricing2_single-valuefunction}, respectively. For the remaining constraints, in model~$(\mathsf{\textcolor{OliveGreen}{BPMD_d}})$, we can replace variable $y_{i}^k$ by $\sum\limits_{m \in M} w_{mi}^k$ by constraint~\eqref{eq:pricing1-yw}. Therefore:
    \begin{itemize}
        \item Constraints~\eqref{eq:pricing1-mc3} reads $w_{mi}^k \leq \sum\limits_{u \in M} w_{ui}^k$. They are satisfied by $(\tilde{{X}},\tilde{{Y}},\tilde{{z}})$, because $\tilde Y_{mi}^k \geq 0 \;$ for all $\;m\in M, i \in V, k \in K$, and thus $\tilde Y_{mi}^k \leq \sum\limits_{u \in M} \tilde Y_{ui}^k$.
        \item Constraints~\eqref{eq:pricing1-mc1bis} read $ \sum\limits_{u \in M} w_{ui}^k \leq w_{mi}^k + \sum\limits_{u \in M: u \neq m} X_{ui}^k$. They are satisfied by $(\tilde{{X}},\tilde{{Y}},\tilde{{z}})$ because for all $m \in M, i \in V, k \in K$: 
            \begin{equation*}
               \sum_{u \in M} \tilde Y_{ui}^k \leq \tilde Y_{mi}^k + \sum_{u \in M:u\neq m} \tilde X_{ui}^k \iff  \sum_{u \in M: u \neq m} \tilde Y_{ui}^k \leq \sum_{u \in M:u\neq m} \tilde X_{ui}^k
            \end{equation*}
        hold as $\tilde Y_{ui}^k, \tilde X_{ui}^k \geq 0$ and $\tilde Y_{ui}^k \leq \tilde X_{ui}^k$ for all $u \in M, i \in V, k \in K$ from \eqref{eq:pricing2_single-interdiction}.
        \item Constraints~\eqref{eq:pricing1_single-interdiction} read $\sum\limits_{m \in M} w_{mi}^k \leq \sum\limits_{m \in M} X_{mi}^k$. They are satisfied by $(\tilde{{X}},\tilde{{Y}},\tilde{{z}})$ because $\sum\limits_{m \in M} \tilde Y_{mi}^k \leq \sum\limits_{m \in M} \tilde X_{mi}^k$ holds as $\tilde Y_{mi}^k, \tilde X_{mi}^k \geq 0$ and $\tilde Y_{mi}^k \leq \tilde X_{mi}^k$ for all $m \in M, i \in V, k \in K$ from \eqref{eq:pricing2_single-interdiction}.
        \item Constraints~\eqref{eq:pricing1_single-route} read ``$(\sum\limits_{m \in M} {w}_{m}^k, {z}^k)$ \textit{is a route}'' for all $k \in K$, which correspond to constraints~\eqref{eq:pricing2_single-route}.
    \end{itemize}
    Hence, also inequality $v_{LP}(\mathsf{\textcolor{OliveGreen}{BPMD}}) \geq v_{LP}(\mathsf{\textcolor{OliveGreen}{BPMD_d}})$ holds. 
    Consequently, we have that $v_{LP}(\mathsf{\textcolor{OliveGreen}{BPMD}}) = v_{LP}(\mathsf{\textcolor{OliveGreen}{BPMD_d}}).$ 
\end{proof}

\section{Valid inequalities}\label{sec:valid_inequalities}

In this section, we present some valid inequalities that strengthen the proposed formulations, cutting off parts of the feasible domain because of symmetries or dominance conditions. 

First of all, we consider the setting in which the carriers' problems are all equivalent, i.e., when $b^k$ (or $t_{max}^k$) and ${c}^k$ are the same for all $k$ in $K$. In this case (which is the case we consider in our numerical experiments, see Section~\ref{sec:results}), whenever we impose an inequality for a given $\bar k$, we can add the same inequality for all $k \in K.$ 

In the same setting, we can consider the so-called \textit{symmetry-breaking inequalities} which help reduce the number of equivalent solutions in the platform's feasible region, making it faster for optimization algorithms to find the optimal solution. The symmetry-breaking inequalities read, for formulation~$(\mathsf{\textcolor{OliveGreen}{BPFM}})$:
$$\sum_{i\in V} x_i^{k-1} \geq \sum_{i\in V} x_i^{k} \qquad \forall k \in K\setminus\{1\},$$
and for formulations~$(\mathsf{\textcolor{OliveGreen}{BPMD}})$ and $(\mathsf{\textcolor{OliveGreen}{BPMD_d}})$:
$$\sum_{i\in V}\sum_{m \in M} X_{mi}^{k-1} \geq \sum_{i\in V}\sum_{m \in M} X_{mi}^{k} \qquad \forall k \in K\setminus\{1\},$$
imposing that the number of customers assigned to carrier $k-1$ is greater than the number of customers assigned to carrier $k$.

The second type of valid inequalities we can add is from the family of so-called \textit{cover inequalities}. They can be added to the model also if the carriers are not necessarily equivalent, but have a capacity constraint (i.e., we consider formulations with constraints on the maximum number of packages to serve). This family of inequalities prevents the formation of suboptimal or inefficient customer visit sequences by specifying certain patterns that should be avoided. 
Indeed, for the models with the constraint on the maximum number of customers each carrier $k$ can serve, the platform could identify, for each carrier $k$, all the sets of $b^k$ customers that, even when setting the compensations to the highest values for the carrier (and performing the optimal tour to serve them), would not be served together, because corresponding to a negative profit for carrier $k$. 
Set $S \subset V$ is called a \textit{cover} with respect to carrier $k \in K$, if $|S| = b^k$ and $\sum\limits_{i \in S}(1-m_{\text{min}})p_i - c^k_{\mathsf{TSP}}(S) < 0$. For any $k \in K$, given its cover $S$, we can add the following valid inequality for formulations~$(\mathsf{\textcolor{OliveGreen}{BPFM}})$, and $(\mathsf{\textcolor{OliveGreen}{BPFM\text{-}z}})$:
\begin{equation}\label{eq:cover1}
\sum\limits_{i \in S} x_i^k \leq b^k - 1,
\end{equation}
and for formulations~$(\mathsf{\textcolor{OliveGreen}{BPMD}})$, $(\mathsf{\textcolor{OliveGreen}{BPMD_d}})$, $(\mathsf{\textcolor{OliveGreen}{BPMD\text{-}z}})$, and $(\mathsf{\textcolor{OliveGreen}{BPMD_d\text{-}z}})$:
\begin{equation}\label{eq:cover2}
   \sum\limits_{i \in S} \sum_{m \in M} X_{mi}^k \leq b^k - 1,
\end{equation}
which exclude the identified suboptimal tour.

Identifying all the sets $S$ which correspond to inefficient assignment decisions may be computationally heavy. Indeed, checking for all $k\in K$ all the subsets of cardinality $b^k$ means enumerating $\sum\limits_{k \in K} \binom{n\;}{b^k}$ many subsets~$S$. Furthermore, computing the optimal tour serving the set of customers~$S$ is \NP-hard. Thus, even for small values of $|K|$ and $b^k$, this approach would be computationally intractable. 
Alternatively, one could separate these cover inequalities dynamically and/or heuristically (see Section~\ref{sub:separ}).

Additionally to the above presented valid inequalities, we note that all the formulations could be further strengthened by leveraging the WTA-PTP formulation. Indeed, including constraints imposing the non-negativity of the carriers' profits, i.e., constraints~\eqref{eq:wta-ptp:wta} 
$$\sum_{i \in V} \bar p_{i}^k \alpha_{i}^k - \sum_{(i,j) \in A} c^k_{ij} z_{ij}^k  \geq 0 \qquad \forall\; k \in K$$
for the setting with fixed margins, or constraints~\eqref{eq:wta-ptp-md:wta}
$$\sum_{i \in V} \sum_{m \in M} \bar p_{mi}^k A_{i}^k - \sum_{(i,j) \in A} c^k_{ij} z_{ij}^k  \geq 0 \qquad \forall\; k \in K $$
for the one with margin decisions, leads to better LP relaxations. 
As for the no-good cuts based formulation~$(\mathsf{\textcolor{OliveGreen}{BPFM'}})$ (or $(\mathsf{\textcolor{OliveGreen}{BPMD'}})$ reported in Online Appendix~C), with the same purpose, introducing the binary variable ${z}$, besides constraints~\eqref{eq:wta-ptp:wta} (or \eqref{eq:wta-ptp-md:wta}) one could include also constraints~\eqref{eq:wta-ptp:route} (or \eqref{eq:wta-ptp-md:route}), which impose that ${z}^k$ is the incidence vector of a route that visit the customers $i$ s.t.\ $\alpha_i^k=1$ (or $\sum_{m \in M} A_{mi}^k = 1$, respectively).

\section{Solution approach}\label{sec:solution}

In this section, we present the solution approach we developed to solve the single-level problem reformulations proposed in Section~\ref{sec:formulations}. We designed a branch-and-cut algorithm where value function constraints, the no-good cuts for formulation \eqref{eq:BPFM2}, as well as the ones related to the value of ${\theta}$ when projecting out ${z}$ variables, are separated dynamically as they are exponentially many.
First, in Section~\ref{sub:separ}, we discuss the separation procedures for the exponentially many constraints included in the models.
Second, in Section~\ref{sub:heuristic}, we present a heuristic algorithm, based on the solution of the BPFM, to generate a feasible solution used as a warm-start for the exact approach for the BPMD.

\subsection{Separation procedures}\label{sub:separ}

In this section, we describe the separation procedures we use to dynamically detect violated value function constraints in the single-level reformulations presented in Section~\ref{sec:formulations}.
Indeed, we notice that these constraints are exponentially many and \NP-hard to separate, as they require finding an optimal PTP solution, corresponding to an optimal follower response for a given assignment ${y}$ of the leader. 
As for the formulation with the no-good cuts, these constraints themselves (i.e., constraints~\eqref{eq:BPFM2:nogood} in $(\mathsf{\textcolor{OliveGreen}{BPFM'}})$ and the corresponding ones in $(\mathsf{\textcolor{OliveGreen}{BPMD'}})$) need to be separated by solving a PTP for each carrier. Concerning the formulations with the ${z}$ variables, we notice that there is an exponential number of constraints of type~\eqref{eq:routing_single-valuefunction} in $(\mathsf{\textcolor{OliveGreen}{BPFM}})$, and of type~\eqref{eq:pricing1_single-valuefunction} and~\eqref{eq:pricing2_single-valuefunction} in $(\mathsf{\textcolor{OliveGreen}{BPMD}})$ and $(\mathsf{\textcolor{OliveGreen}{BPMD_d}})$. In the formulations obtained by projecting out the ${z}$ variables, not only constraints~\eqref{eq:routing-proj-cuts} in $(\mathsf{\textcolor{OliveGreen}{BPFM\text{-}z}})$, \eqref{eq:pricing1-proj-valuefunction} in $(\mathsf{\textcolor{OliveGreen}{BPMD\text{-}z}})$, and \eqref{eq:pricing2-proj-valuefunction} in $(\mathsf{\textcolor{OliveGreen}{BPMD_d\text{-}z}})$ are exponential in number, but also constraints~\eqref{eq:nogood-routing} and \eqref{eq:pricing2-proj-gamma}, respectively. The separation of cover inequalities introduced in Section~\ref{sec:valid_inequalities} also requires solving an \NP-hard problem.

We thus propose a separation procedure for these constraints, which we describe in the following referring to the problem with fixed margins, without loss of generality. We note that all constraints are separated on integer solutions only, in order to speed up the solution process.

\textbf{\textit{Separation of constraints~\eqref{eq:routing_single-valuefunction}\\}
}When dealing with formulation~$(\mathsf{\textcolor{OliveGreen}{BPFM}})$, for any given solution $(\tilde{{x}}, \tilde{{y}}, \tilde{{z}})$ of the master problem, obtained by relaxing constraints~\eqref{eq:routing_single-valuefunction}, we solve the PTP with ${x} = \tilde{{x}}$ for each $k$. Let $(\hat{{y}}^k, \hat{{z}}^k)$ be the optimal solution of the latter problem, with $\hat T^k$ and $\hat \varphi^k$  being the corresponding tour and value, respectively. If there exists a $k$ such that
$\sum\limits_{i \in V} \bar p^k_{i}  \tilde y^k_i - \sum\limits_{(i,j) \in A} c^k_{ij} \tilde z^k_{ij} < \hat \varphi^k$, then the following constraint
$$ \sum_{i \in V}  \bar p^k_{i}  y^k_i - \sum_{(i,j) \in A} c^k_{ij} z^k_{ij}  \ge  \sum_{i \in V(\hat T^k)}  \bar p^k_{i} x^k_i - C^k(\hat T^k) $$
is violated by the current solution of the master problem. Thus, we insert it into the master problem. Otherwise, the obtained solution is feasible (optimal if we are at the root node) for the original bilevel formulation.

\textbf{\textit{Separation of constraints~\eqref{eq:nogood-routing}\\}}
When considering formulation~$(\mathsf{\textcolor{OliveGreen}{BPFM\text{-}z}})$, we have to separate constraints~\eqref{eq:nogood-routing} as well, which are also exponentially many. Thus, we first solve the master problem obtained by relaxing both~\eqref{eq:routing-proj-cuts} and \eqref{eq:nogood-routing}, finding the solution $(\tilde{{x}}, \tilde{{y}}, \tilde{{\theta}})$, with the corresponding tour $\tilde T^k$ (with $c^k_{\mathsf{TSP}}(V(\hat T^k))$, the cost of the optimal route associated to $\tilde T^k$ nodes). Then, we solve the lower-level problems with ${x} = \tilde{{x}}$ for each $k$, obtaining the optimal solution $(\hat{{y}}^k, \hat{{z}}^k)$, the corresponding tour $\hat T^k$, and the value $\hat \varphi^k$. 

In the case in which there exists a $k$ such that $\tilde \theta^k  < c^k_{\mathsf{TSP}}(V(\tilde T^k))$, we insert the following constraint:
\begin{equation}\label{eq:separation_theta}
    \theta^k  \geq c^k_{\mathsf{TSP}}(V(\tilde T^k)) \left[\sum_{i \in V(\tilde T^k)} y^k_i - |V(\tilde T^k)| + 1\right].
\end{equation}

If instead, for all $k$, inequality
\begin{equation} \label{eq:check}
    \tilde \theta^k  \geq c^k_{\mathsf{TSP}}(V(\tilde T^k)),
\end{equation}
does hold, then we proceed with the separation of \eqref{eq:routing-proj-cuts} as we did for constraints~\eqref{eq:routing_single-valuefunction} above, i.e., if there exists a $k$ such that
$\sum\limits_{i \in V} \bar p^k_{i}  \tilde y^k_i - \tilde \theta^k < \hat \varphi^k$, then the we add the following constraint
$$\sum_{i \in V}  \bar p^k_{i}  y^k_i - \theta^k  \ge  \sum_{i \in V(\hat T^k)}  \bar p^k_{i} x^k_i - C^k(\hat T^k) $$
to the master. 

\textbf{\textit{Separation of constraints~\eqref{eq:proj-tmax}\\}}
When taking into account the maximum duration constraints~\eqref{eq:proj-tmax} on variable $\theta^k$, we use the following separation procedure for each $k$:
\begin{itemize}
    \item if $c^k_{\mathsf{TSP}}(V(\tilde T^k)) \leq t_{max}^k$, we add cut~\eqref{eq:separation_theta};
    \item otherwise (i.e., if $c^k_{\mathsf{TSP}}(V(\tilde T^k)) > t_{max}^k$), we cut off the solution $\tilde{{y}}^k$ because it is infeasible, adding the following no-good cut: $\sum\limits_{i \in V(\tilde T^k)} y_i^k \leq |V(\tilde T^k)| -1$.
\end{itemize}

\textbf{\textit{Separation of no-good cuts~\eqref{eq:BPFM2:nogood}\\}}
Given formulation~$(\mathsf{\textcolor{OliveGreen}{BPFM'}})$, the separation procedure we follow consists in: $a)$ solving the relaxation obtained by relaxing the no-good cuts~\eqref{eq:BPFM2:nogood} to get a solution $\tilde{{\alpha}}$ and the corresponding sets $\tilde{\mathcal{V}}^k =\{i \in V\,: \,\alpha^k_i = 1 \}$ for all $k\in K$; $b)$ solving the $|K|$ PTP problems~\eqref{eq:routingLL} with ${x} = {\alpha}$ obtaining the optimal carriers' responses; and, $c)$ in case there is a $k\in K$ for which ${y}^k < {\alpha}^k$, the following no-good cut is added to the relaxation of~\eqref{eq:BPFM2}
$$\sum_{i \in \tilde{\mathcal{V}}^k}  (1 - \alpha^k_i) +  \sum_{i \not \in  \tilde{\mathcal{V}}^k} \alpha^k_{i} \geq 1, $$
otherwise, the obtained solution is feasible for the original formulation. 

We highlight here that this type of cut only excludes the current tentative assignment of the master problem, which explains why the use of these no-good cuts turns out to be inefficient in practice (see numerical results Section~\ref{sec:results}).

\textbf{\textit{Separation of cover inequalities~\eqref{eq:cover1} and \eqref{eq:cover2}\\}}
In Section~\ref{sec:valid_inequalities}, cover inequalities have been introduced as valid inequalities for formulations with the bound on the number of packages each carrier can deliver. They can be separated dynamically in the following way. For a given $k\in K$, let $\tilde{{x}}^k$ be a solution to the master problem. If $\sum_{i \in V} \tilde x_i^k = b^k$, let $\tilde S$ be the support set of $\tilde{{x}}^k.$ If $\sum\limits_{i \in \tilde S}(1-m_{\text{min}})p_i - c^k_{\mathsf{TSP}}(\tilde S) < 0,$ then $\tilde S$ defines a cover, and we can add the corresponding cover inequality~\eqref{eq:cover1} or \eqref{eq:cover2} to the model.

As an alternative, the following heuristic can be executed as a preprocessing step in order to find some of the covers $S$. For each $k$, sort the customers in non-decreasing order of the associated compensation $(1-m_{\text{min}})p_i$; take the first $\lceil b^k / 2 \rceil$ customers according to the sorting, defining the set $\bar S$; iteratively, for each vertex $u \in \bar S$, select the $\lfloor b^k / 2 \rfloor$ customers whose distances from $u$ are maximal. In this way, one obtains $\lceil b^k / 2 \rceil$ sets $S$ for which a valid cover inequality can be added.

Finally, one can note that if $$\min\limits_{\substack{S \subset V \\|S|=b^k }} \left\{\sum\limits_{i \in S}(1-m_{\text{min}})p_i - C^k(S) \right\}\geq 0$$ for a given $k$, no violated cover inequality exists for the corresponding carrier $k$. We perform this check as a preprocessing step in our experiments.

\subsection{Heuristic solution approach}\label{sub:heuristic}

In this section, we describe a heuristic procedure used to obtain a warm-start feasible solution for the branch-and-cut algorithm solving the BPMD, which possibly corresponds to a tighter lower bound of the problem compared to the one provided by the $(\mathsf{\textcolor{OliveGreen}{UCC\text{-}PTP}})$, or the one found by solving the $(\mathsf{\textcolor{OliveGreen}{WTA\text{-}PTP}})$ and retrieving its associated feasible solution. It consists of three phases, involving the solution of the BPFM. 

First, the BPFM (modeled either as $(\mathsf{\textcolor{OliveGreen}{BPFM}})$, or as $(\mathsf{\textcolor{OliveGreen}{BPFM\text{-}z}})$, or as $(\mathsf{\textcolor{OliveGreen}{BPFM'}})$) is solved by setting all the compensations to their highest values, i.e., $\bar p_i^k=(1-m_{\text{min}})p_i$ for all $i\in V, k \in K$. In this way, we obtain a first feasible solution to the BPMD. Note that, in case no package is assigned in this solution, then it means that no other solution exists in which any package is served. Indeed, this is the most rewarding compensation decision for the followers, and assigning no packages means that the compensations are still too low with respect to the routing costs. 

In case the solution to the first step is nonempty, in the second step, BPMD is solved with all the variables, but the margin variables, fixed to the values returned by the first phase, plus an additional constraint imposing that the profit of each carrier should be nonnegative. This corresponds to solving the following auxiliary problem, {which is a multiple-choice knapsack solved independently for each follower}:
\begin{subequations}\label{eq:single}
    \begin{align}
        \max_{X} & \sum\limits_{k \in K} \sum\limits_{m \in M} \sum\limits_{i \in V(\hat T^k)} p_{mi}X_{mi}^k && \\
        \text{s.t.} & \sum\limits_{m \in M} X_{mi}^k = \hat x^k_i && \forall\; i \in V,k \in K \\
        & \sum_{m \in M}\sum_{i \in V(\hat T^k)} p_{mi}X_{mi}^k \leq \sum_{i \in V(\hat T^k)}p_i - C^k(\hat T^k) && \forall\;k\in K,
    \end{align}
\end{subequations}

\noindent where $\hat{{x}}$, and $\hat T$ come from the assignment and routing decisions made at the first step of the heuristic. 
The solution to the above problem determines which is the best margin decision for the leader, i.e., $\check{{X}}$, given the assignment of the items decided in the previous phase.

Finally, in the third step, the BPFM is solved again by fixing the margins according to the solution obtained in the second step, i.e., for each $i$ and $k$, we set $\bar p_i^k=p_i-\sum\limits_{m \in M}\check{X}_{mi}^kp_{mi}.$
The solution obtained in the third step, together with the $\check{{X}}$ found in the second step, is a feasible solution to the problem. Note that this last step is needed as, otherwise, the solution obtained from the second step might not be bilevel feasible, as it does not necessarily optimize the followers' objective. The algorithm will then return the best solution in terms of the leader's objective function between the one found in the first and the third step. 

As already noted, the separation of no-good cuts is computationally more expensive than that of value function cuts (see Section~\ref{sub:fixed}), thus the employment of $(\mathsf{\textcolor{OliveGreen}{BPFM'}})$ within the heuristic is not preferable.
Depending on the presence of a budget constraint (\eqref{eq:routingLL-budget}, \eqref{eq:pricing1LL-budget} or \eqref{eq:pricing2LL-budget}) or a route duration constraint (\eqref{eq:pricing-tmax} or \eqref{eq:proj-tmax}) the problem to solve at each step changes. Indeed, when there is a constraint on the duration of the route, it is better to have a problem with the ${z}$ variables in the upper level, i.e., $(\mathsf{\textcolor{OliveGreen}{BPFM}})$, because constraints~\eqref{eq:pricing-tmax} do not need to be separated. Instead, some preliminary experiments showed that, as expected, when there is no route duration constraint, the problem without the ${z}$ variables $(\mathsf{\textcolor{OliveGreen}{BPFM\text{-}z}})$ turns out to be faster.
The overall heuristic is described in Algorithm~\ref{algo:heuristic1}.

\begin{algorithm}[ht]\caption{Heuristic algorithm for BPMD}
    \LinesNumbered
    \SetAlgoLined 
    \label{algo:heuristic1}
    \KwData{Sets $V, A, K, M=\{m_{\text{min}},\dots,m_{\text{max}}\},$ and parameters $p_i$ for all $i \in V$, $c^k_{ij}$ for all $(i,j) \in A$, and $b^k$ (or $t_{max}^k$) for all $k \in K$.} 
    \KwResult{Feasible solution for BPMD.} 
    Set $\bar p_i^k=(1-m_{\text{min}})p_i$ for all $i\in V, k \in K.$ Solve BPFM. Return the optimal solution in terms of assignment of the leader $\hat{{x}}$, and acceptance and routing decisions of the followers $\hat{T}$.\\
    Solve problem~\eqref{eq:single}. Return the optimal solution $\check{{X}}$. \\
    Set $\bar p_i^k=p_i-\sum\limits_{m \in M}\check{X}_{mi}^kp_{mi}$ for all $i\in V, k \in K.$ Solve BPFM. Return the optimal solution $(\check{{x}}, \check{T})$.\\
    \Return the best solution, in terms of the leader's objective function, between $(\hat{x}, \hat{T})$ and $(\check{x}, \check{T})$.
    \end{algorithm}

\section{Numerical Results}\label{sec:results}

We test the different models developed in the previous sections on two sets of instances.

The first set of 15 instances is generated in the following way: the graph $\mathcal{G}=(V, A)$ and the number of vehicles $|K|$ are taken from the ``\textit{Chao instances}'' \citep{chao1996}, originally proposed for the team orienteering problem (the number of customers $|V|$ is in $\{20,31,32,63,65\}$ and the number of vehicles $|K|$ is in $\{2,3,4\}$); costs $c^k_{ij}$ are assumed to be the same for all the vehicles $k \in K$ and equal to the Euclidean distance between $i$ and $j$ for each arc $(i,j)$, rounded to the nearest not smaller integer; node prices $p_i$ are generated pseudo-randomly in $[0,100]$ following the Generation 2 procedure proposed in \cite[Section 6]{fischetti1998}, consisting in setting 
$p_i:=1+(7141\cdot i+73)\text{ mod} (100)$ for each $i\in V$. The prices are available as a feature in the instances proposed in \cite{chao1996}, but these instances are conceived for team orienteering problem applications with two depots (one is the starting point and the other is the ending point of the vehicles' routes), thus, as we are in a one-depot framework, we generate the nodes prices according to the previously presented formula. We recall that each customer is associated with the demand for a single item.

The second set of 12 instances has the following characteristics: the graphs $\mathcal{G}=(V, A)$ are taken from the famous benchmark set of instances known as ``\textit{Solomon instances}'' R101 -- randomly distributed customers -- \citep{solomon}, with a number of customers in $\{20,25,30,35\}$; we set $|K|\in\{1,2,3\}$ following the same setting of the \textit{Chao instances}; costs $c^k_{ij}$ are again assumed to be the same for all the vehicles $k \in K$ and equal to the Euclidean distances; node prices $p_i$ are generated as for the former set of instances.

The bound $b^k$ on the number of items each vehicle can serve is set to $\left \lceil\frac{|V|}{|K|}\right \rceil +2$. The upper bound on the duration of the route $t_{max}^k$, which we discussed in Section~\ref{sub:tmax} is only available for the \textit{Chao instances}, thus we only solve this type of instance when considering the duration constraint. In particular, each \textit{Chao instance} has a specific value of $t_{max}^k$ identified by an alphabetic letter. We take the instances of type ``k''.

The proposed formulations are implemented in Python 3.10 and solved by using the Cplex solver (version 22.1.0.0) \citep{cplex}, with a time limit of one hour. All the experiments are conducted on a 3.7 GHz Intel Xeon W-2255 CPU, 128 GB RAM. 

We present the numerical results obtained by testing the formulations of the problem with fixed margin in Section~\ref{sub:fixed}, and the ones obtained by testing the formulations of the problem with margin decisions in Section~\ref{sub:marg}. In Section~\ref{sub:gain}, we discuss the gain of the margin decisions through the analysis of two \textit{Solomon instances}.

\subsection{Results on the fixed margins problem}\label{sub:fixed}
In this section, we report the summary of the results obtained by solving the two formulations proposed for the problem with fixed margins: $(\mathsf{\textcolor{OliveGreen}{BPMF}})$, and $(\mathsf{\textcolor{OliveGreen}{BPMF'}})$. We test these models on the \textit{Chao} and the \textit{Solomon instances} described above, considering margins: $0.2, 0.5, 0.7, 0.8$, and $0.9$. 
For each instance, we assume the same margin $\hat m$ is applied to all items and carriers, i.e., $\bar p_i^k = (1-\hat m)p_i$ for all $i \in V, k \in K$. 
In Table~\ref{tab:fixed}, there are two blocks of columns, one for each of the formulations considered. For each of the two we report: \textit{LB} and	\textit{UB}, the average lower and upper bound at termination, respectively; \textit{gap}, the average percentage gap returned by Cplex at termination; \textit{time}, the average computing time in seconds; \textit{\#nodes}, the average number of nodes of the branch-and-cut tree at termination; \textit{\%served}, the percentage of served costumers. The first part of the table is associated with the 15 \textit{Chao instances}, and the second part with the 12 \textit{Solomon instances}. 

\begin{table}[ht!]
    \centering
    \scalebox{0.6}{\begin{tabular}{|l|r r r r r r |r r r r r r|} \hline
        \multirow{2}{*}{} &  \multicolumn{6}{c|}{Model~$(\mathsf{\textcolor{OliveGreen}{BPFM}})$} & \multicolumn{6}{c|}{Model~$(\mathsf{\textcolor{OliveGreen}{BPFM'}})$} \\ \cline{2-13}
        & LB & UB & gap & time & \#nodes & \%served & LB & UB & gap & time & \#nodes &   \%served  \\ \hline
        \textit{Chao instances} & \multicolumn{12}{c|}{} \\ \hline
        \cellcolor[HTML]{D9E1F2}0.2 & 430.4 & 430.4 & {0.00} & {562.7} & 37 & 100 & 394.2 & 430.4 & 8.41 & 1292.7 & 48 & 94.1 \\ \hline
        \cellcolor[HTML]{D9E1F2}0.5 & 1076.0 & 1076.0 & 0.00 & {260.8} & 120 & 100 & 1076.0 & 1076.0 & 0.00 & 836.6 & 112 & 99.9 \\ \hline
        \cellcolor[HTML]{D9E1F2}0.7 & 1377.6 & 1506.7 & {8.55} & {1516.5} & 2060 & 92.9 & 1230.1 & 1506.4 & 18.34 & 1572.9 & 800 & 83.6 \\ \hline
        \cellcolor[HTML]{D9E1F2}0.8 & 1631.2 & 1721.6 & {5.25} & {2876.4} & 16745 & 92.7 & 1074.0 & 1721.6 & 37.61 & 2990.0 & 3180 & 64.5 \\ \hline
        \cellcolor[HTML]{D9E1F2}0.9 & 1601.4 & 1936.8 & {17.32} & 3600 & 202061 & 76.5 & 831.4 & 1936.8 & 57.07 & 3600 & 259301 & 43.1 \\ \hline\hline
        \textit{Solomon instances} & \multicolumn{12}{c|}{} \\ \hline
        \cellcolor[HTML]{D9E1F2}0.2 & 270.5 & 270.5 & {0.00} & {256.5} & 1707 & 100 & 270.3 & 270.5 & 0.08 & 538.9 & 694 & 99.5 \\ \hline
        \cellcolor[HTML]{D9E1F2}0.5 & 674.6 & 676.3 & {0.24} & {1307} & 39087 & 98.3 & 620.2 & 676.3 & 8.29 & 2034 & 8682 & 90.5 \\ \hline
        \cellcolor[HTML]{D9E1F2}0.7 & 779.7 & 942.1 & 17.2 & 2564 & 420935 & 74.1 & 774.7 & 943.4 & 17.9 & 2742 & 725464 & 73.5 \\ \hline
        \cellcolor[HTML]{D9E1F2}0.8 & 357.3 & 357.3 & {0.00} & {205.3} & 57739 & 20.4 & 357.3 & 362.6 & 1.43 & 498.1 & 102156 & 21.0 \\ \hline
        \cellcolor[HTML]{D9E1F2}0.9 & 0.0 & 0.0 & {0.00} & 0.3 & 0 & 0 & 0.0 & 0.0 & 0.00 & 0.3 & 0 & 0 \\ \hline
    \end{tabular}}
    \caption{Comparison between model~$(\mathsf{\textcolor{OliveGreen}{BPFM}})$ and $(\mathsf{\textcolor{OliveGreen}{BPFM'}})$.}
    \label{tab:fixed}
\end{table}
It is clear from these results that $(\mathsf{\textcolor{OliveGreen}{BPFM}})$ is the best formulation in terms of computational efficiency. For all the tested margins, indeed, both the gap and the time of $(\mathsf{\textcolor{OliveGreen}{BPFM}})$ are better than the ones of $(\mathsf{\textcolor{OliveGreen}{BPFM'}})$. These results motivate the use of model  $(\mathsf{\textcolor{OliveGreen}{BPFM}})$ in the execution of steps 1 and 3 of the heuristic given in Algorithm~\ref{algo:heuristic1}, as well as the choice of discarding $(\mathsf{\textcolor{OliveGreen}{BPMD'}})$ in the tests presented in the following section.

\subsection{Results on the problem with margin decisions}\label{sub:marg}
In this section, we discuss the numerical results obtained by testing the formulations proposed for the problem with margin decisions. We consider the following sets of margins $M$: $\{0.2, 0.5\},$ $\{0.5, 0.9\},$ $\{0.2, 0.5, 0.8\},$ and $\{0.5, 0.7, 0.9\}.$ We restrict our tests to two or three margins as one might reasonably assume that, in practical settings, the choice may be among low and high margins or low, medium, and high margins. The value of the margins is selected after some preliminary experiments aimed at identifying values that generate different solution structures (as shown in the following).

The feasible solutions found by the heuristic Algorithm~\ref{algo:heuristic1} are used as MIP start for Cplex. The heuristic solves at steps 1 and 3 either the $(\mathsf{\textcolor{OliveGreen}{BPFM\text{-}z}})$ formulation or the $(\mathsf{\textcolor{OliveGreen}{BPFM}})$ formulation when the budget constraint or the route duration limit are considered, respectively. 
A time limit of 1 hour is set for each heuristic phase and for the models solution as well. 

A first set of experiments is performed comparing the bilevel solutions obtained through the heuristic proposed in Algorithm~\ref{algo:heuristic1}, with the ones of the $(\mathsf{\textcolor{OliveGreen}{WTA\text{-}PTP\text{-}MD}})$ formulation~\eqref{eq:wta-ptp-md} with margin decisions and the UCC-PTP formulation~\eqref{eq:ucc-ptp} for the lowest values of margins in $M$ (i.e., the highest compensation for the carriers). Indeed, we cannot include margin decisions in the UCC-PTP problem, as it is reasonable to assume that, in this setting, the carriers will always select the highest possible compensation if multiple ones are available, which means setting $\bar{p}_i^k=(1-m_{min})p_i \; \forall i\in V, k\in K$ in the objective function of formulation~\eqref{eq:ucc-ptp}.
We perform these experiments first of all in order to evaluate the quality of the feasible solutions returned by the heuristic proposed in Algorithm~\ref{algo:heuristic1}, which solves a sequence of BPFM formulations, against the quality of the solutions of the single-level UCC-PTP and of the \textit{recovered solutions} of the single-level $(\mathsf{\textcolor{OliveGreen}{WTA\text{-}PTP\text{-}MD}})$. Furthermore, these tests allow us to evaluate the impact of changes in the problem setting on the value of the platform solution. 

Tables~\ref{tab:h1} and \ref{tab:h2} summarize the results on the instances with capacity constraints and route duration constraints, respectively. 
For the heuristic Algorithm~\ref{algo:heuristic1}, and the UCC-PTP formulation~\eqref{eq:ucc-ptp}, we report: \textit{LB}, the average lower bound returned by the method; \textit{time}, the average computing time (in seconds) needed to return the solution; \textit{gap\_LB}, the average percentage gap between \textit{LB} and the best lower bound \textit{LB}$^*$ on the optimal solution returned by the four methods~$(\mathsf{\textcolor{OliveGreen}{BPMD}})$, $(\mathsf{\textcolor{OliveGreen}{BPMD_d}})$, $(\mathsf{\textcolor{OliveGreen}{BPMD\text{-}z}})$ and $(\mathsf{\textcolor{OliveGreen}{BPMD_d\text{-}z}})$ (\textit{gap\_LB}$=100\frac{\text{\textit{LB}}-\text{\textit{LB}$^*$}}{\text{\textit{LB}$^*$}}$). 
For the $(\mathsf{\textcolor{OliveGreen}{WTA\text{-}PTP\text{-}MD}})$ formulation~\eqref{eq:wta-ptp-md}, we report: \textit{LBrec}, the average profit gained by the platform when considering carriers' reaction to the assignment corresponding to the $(\mathsf{\textcolor{OliveGreen}{WTA\text{-}PTP\text{-}MD}})$ solution (i.e., the value of what we define as \textit{recovered solution}, see Section~\ref{sub:singlelev}); \textit{UB}, the average upper bound returned by the method; \textit{time}, the average computing time needed to return the corresponding solution; \textit{gap\_UB}, the average percentage gap between \textit{UB} and the best lower bound \textit{LB}$^*$ on the optimal solution returned by the four methods~$(\mathsf{\textcolor{OliveGreen}{BPMD}})$, $(\mathsf{\textcolor{OliveGreen}{BPMD_d}})$, $(\mathsf{\textcolor{OliveGreen}{BPMD\text{-}z}})$ and $(\mathsf{\textcolor{OliveGreen}{BPMD_d\text{-}z}})$ (\textit{gap\_UB}$=100\frac{\text{\textit{UB}}-\text{\textit{LB}$^*$}}{\text{\textit{LB}$^*$}}$); \textit{gap\_LBrec}, the average percentage gap between the profit associated with the \textit{recovered solution}, i.e., \textit{LBrec}, and \textit{LB}$^*$  (\textit{gap\_LBrec}$=100\frac{\text{\textit{LBrec}}-\text{\textit{LB}$^*$}}{\text{\textit{LB}$^*$}}$).

\begin{table}[ht!]
    \centering
    \scalebox{0.7}{\begin{tabular}{|l|rrr|rrr|rrrrr|} \hline
        \multirow{2}{*}{} & \multicolumn{3}{c|}{Heuristic} &  \multicolumn{3}{c|}{$(\mathsf{\textcolor{OliveGreen}{UCC\text{-}PTP}})$}  & \multicolumn{5}{c|}{$(\mathsf{\textcolor{OliveGreen}{WTA\text{-}PTP\text{-}MD}})$} \\ \cline{2-12}
        & LB & time & gap\_LB 
        & LB & time & gap\_LB 
        & LBrec & UB & time & gap\_UB & {gap\_LBrec}\\ \hline
        \textit{Chao instances} & \multicolumn{11}{c|}{} \\ \hline
        \cellcolor[HTML]{D9E1F2}\{0.2, 0.5\} & 1076 & 141 & 0.00 & 430 & 1759 & -60.1 & 1072 & 1076 & 0.48 & 0.00 & -0.45 \\ \hline
        \cellcolor[HTML]{D9E1F2}\{0.5, 0.9\} & 1876 & 3484 & -1.88 & 1074 & 1782 & -43.8 & 1730 & 1937 & 243 & 1.54 & -13.6 \\ \hline
        \cellcolor[HTML]{D9E1F2}\{0.2, 0.5, 0.8\} & 1716 & 2344 & -0.28 & 430 & 1759 & -75.0 & 1664 & 1722 & 0.91 & 0.18 & -4.58 \\ \hline
        \cellcolor[HTML]{D9E1F2}\{0.5, 0.7, 0.9\} & 1881 & 3336 & -2.02 & 1074 & 1782 & -43.9 & 1734 & 1937 & 16.5 & 1.19 & -10.5 \\  \hline\hline
        \textit{Solomon instances} & \multicolumn{11}{c|}{} \\ \hline  
        \cellcolor[HTML]{D9E1F2}\{0.2, 0.5\} & 672 & 1865 & -0.63 & 266 & 330.4 & -60.6 & 624 & 676 & 0.45 & 0.08 & -7.97 \\ \hline
        \cellcolor[HTML]{D9E1F2}\{0.5, 0.9\} & 770 & 4902 & -15.4 & 656 & 315.9 & -28.0 & 576 & 1006 & 1478 & 9.4 & -36.3 \\ \hline
        \cellcolor[HTML]{D9E1F2}\{0.2, 0.5, 0.8\} & 790 & 3927 & -16.8 & 266 & 330.4 & -71.9 & 727 & 1007 & 1119 & 5.36 & -23.7 \\ \hline
        \cellcolor[HTML]{D9E1F2}\{0.5, 0.7, 0.9\} & 773 & 4197 & -18.1 & 656 & 315.9 & -30.9 & 628 & 1014 & 1544 & 6.28 & -32.0 \\ \hline
    \end{tabular}}
    \caption{Comparison between Algorithm~\ref{algo:heuristic1}, the single-level models $(\mathsf{\textcolor{OliveGreen}{UCC\text{-}PTP}})$ and $(\mathsf{\textcolor{OliveGreen}{WTA\text{-}PTP\text{-}MD}})$.}
    \label{tab:h1}
\end{table}

\begin{table}[ht!]
    \centering
    \scalebox{0.7}{\begin{tabular}{|l|rrr|rrr|rrrrr|} \hline
        \multirow{2}{*}{\textit{Chao instances}} & \multicolumn{3}{c|}{Heuristic} &  \multicolumn{3}{c|}{$(\mathsf{\textcolor{OliveGreen}{UCC\text{-}PTP}})$}  & \multicolumn{5}{c|}{$(\mathsf{\textcolor{OliveGreen}{WTA\text{-}PTP\text{-}MD}})$} \\ \cline{2-12}
        & LB & time & gap\_LB 
        & LB & time & gap\_LB
        & LBrec & UB & time & gap\_UB & gap\_LBrec \\ \hline
        \cellcolor[HTML]{D9E1F2}\{0.2, 0.5\} & 529 & 3277 & -0.04 & 211 & 1783 & -60.1 & 526 & 585 & 2014 & 9.22 & -0.47\\ \hline
        \cellcolor[HTML]{D9E1F2}\{0.5, 0.9\} & 941 & 3765 & -1.77 & 529 & 1790 & -44.1 & 920 & 1056 & 1983 & 10.1 & -4.60 \\ \hline
        \cellcolor[HTML]{D9E1F2}\{0.2, 0.5, 0.8\} & 841 & 3665 & -0.74 & 211 & 1783 & -75.1 & 839 & 944 & 2008 & 9.82 & -0.90 \\ \hline
        \cellcolor[HTML]{D9E1F2}\{0.5, 0.7, 0.9\} & 942 & 3573 & -1.55 & 529 & 1790 & -44.3 & 922 & 1057 & 2017 & 9.78 & -4.64 \\  \hline
    \end{tabular}}
    \caption{Comparison between Algorithm~\ref{algo:heuristic1}, the single-level models $(\mathsf{\textcolor{OliveGreen}{UCC\text{-}PTP}})$ and $(\mathsf{\textcolor{OliveGreen}{WTA\text{-}PTP\text{-}MD}})$, when considering the route duration limit.}
    \label{tab:h2}
\end{table}
As expected, we observe that the average gaps with respect to the bilevel solutions returned by the heuristic are much tighter than the ones obtained by solving the $(\mathsf{\textcolor{OliveGreen}{UCC\text{-}PTP}})$ formulation with the highest compensations. When instead the $(\mathsf{\textcolor{OliveGreen}{WTA\text{-}PTP\text{-}MD}})$ model is solved, we assume that the carriers can only either accept or reject the whole bundle of assigned items. This reduced decision power of the carriers with respect to the bilevel setting leads to an overestimation of the platform profit, as shown by the fact that the gap\_UB values are always positive, 
whereas the absolute value of the gap\_LBrec is always higher than the one of the gap\_LB returned by the heuristic. Indeed, although the UB gaps returned by the $(\mathsf{\textcolor{OliveGreen}{WTA\text{-}PTP\text{-}MD}})$ formulation are tight, the solutions that correspond to them are not \textit{bilevel feasible} in our setting, i.e., when the assignment is given to the carriers and they solve their own PTP models, some of the items are not accepted and the true profit of the platform is lower than what expected (with a gap w.r.t. LB$^*$ of up to -63\%).

At this point, we move to discuss the results of the BPMD models. 
The results on  models~$(\mathsf{\textcolor{OliveGreen}{BPMD}})$ and $(\mathsf{\textcolor{OliveGreen}{BPMD_d}})$, and their respective versions without ${z}$ variables introduced in Section~\ref{sub:proj}, $(\mathsf{\textcolor{OliveGreen}{BPMD\text{-}z}})$ and $(\mathsf{\textcolor{OliveGreen}{BPMD_d\text{-}z}})$, either with the constraints on the number of packages or the duration of the route, are reported Tables~\ref{tab:1}--\ref{tab:2} and Tables~\ref{tab:3}--\ref{tab:4}, respectively. Tables~\ref{tab:1} and~\ref{tab:3} have two blocks of columns: one for model~$(\mathsf{\textcolor{OliveGreen}{BPMD}})$ and the other one for $(\mathsf{\textcolor{OliveGreen}{BPMD\text{-}z}})$. Similarly, Tables~\ref{tab:2} and~\ref{tab:4} have two blocks of columns: one for model~$(\mathsf{\textcolor{OliveGreen}{BPMD_d}})$ and the other one for $(\mathsf{\textcolor{OliveGreen}{BPMD_d\text{-}z}})$. For each model, we report: \textit{$\text{LB}_h$}, the average lower bound returned by the heuristic given in Algorithm~\ref{algo:heuristic1}; \textit{\#opt}, the number of instances solved to optimality; \textit{LB}, the average lower bound at termination; \textit{UB}, the average upper bound at termination; \textit{gap}, the average percentage gap returned by Cplex at termination; $\overline{\text{\textit{gap}}}$, the average percentage gap between the $\text{\textit{UB}}^{\min}$ (minimum between \textit{UB} and the upper bound returned by the solution of the $(\mathsf{\textcolor{OliveGreen}{WTA\text{-}PTP\text{-}MD}})$ formulation~\eqref{eq:wta-ptp-md}) and LB; \textit{time}, the average computing time in seconds; \textit{septime}, the average time in seconds needed for the separation of both the value function constraints, and either the subtour constraints or the route value constraint (when ${z}$ variables are projected out); \textit{\#sep}, the average number of separated integer solutions; \textit{\#nodes}, the average number of nodes of the branch-and-cut tree at termination. 
Tables~\ref{tab:1} and \ref{tab:2} consist of two parts: the first part is associated with the 15 \textit{Chao instances}, and the second part with the 12 \textit{Solomon instances}. Instead, Tables~\ref{tab:3} and \ref{tab:4} report average results on the \textit{Chao instances} only because they are the only ones with the information on the route duration. 
Each row reports the margin set $M$ considered.

\begin{table}[hb]
    \centering
    \scalebox{0.6}{\begin{tabular}{|l|r|r r r r r r r r | r |r r r r r r r r | r |} \hline
        \multirow{2}{*}{} & {Heuristic} &  \multicolumn{9}{c|}{Model~$(\mathsf{\textcolor{OliveGreen}{BPMD}})$} & \multicolumn{9}{c|}{Model~$(\mathsf{\textcolor{OliveGreen}{BPMD\text{-}z}})$} \\ \cline{2-20}
        & $\text{LB}_h$
        & \#opt & LB & UB & gap & time & septime & \#sep & \#nodes & $\overline{\text{gap}}$ &
        \#opt & LB & UB & gap & time & septime & \#sep & \#nodes  & $\overline{\text{gap}}$  \\ \hline
        \textit{Chao instances} & \multicolumn{19}{c|}{} \\ \hline
        \cellcolor[HTML]{D9E1F2}\{0.2, 0.5\} & 1076 & 15 & 1076 & 1076 & 0.00 & 6.3 & 0.0 & 1 & 0 & 0.00 & 15 & 1076 & 1076 & 0.00 & 6.8 & 1.6 & 3 & 0 & 0.00 \\ \hline
        \cellcolor[HTML]{D9E1F2}\{0.5, 0.9\} & 1876 & 0 & 1892 & 1937 & 2.36 & 3600 & 1254 & 384 & 193346 & 2.47 & 0 & 1907 & 1937 & 1.60 & 3600 & 2541 & 4472 & 121306 & 1.60 \\ \hline
        \cellcolor[HTML]{D9E1F2}\{0.2, 0.5, 0.8\} & 1716 & 11 & 1719 & 1722 & 0.18 & 1477 & 723 & 309 & 36777 & 0.18 & {11} & 1719 & 1722 & 0.17 & 1337 & 1037 & 2337 & 19481 & 0.17\\ \hline
        \cellcolor[HTML]{D9E1F2}\{0.5, 0.7, 0.9\} & 1881 & 0 & 1893 & 1937 & 2.33 & 3600 & 1310 & 430 & 170169 & 2.33 & 0 & 1911 & 1937 & 1.34 & 3600 & 2397 & 4844 & 143357 & 1.34 \\ \hline\hline
        \textit{Solomon instances} & \multicolumn{19}{c|}{} \\ \hline
        \cellcolor[HTML]{D9E1F2}\{0.2, 0.5\} & 672 & 9 & 675 & 676 & 0.20 & 904 & 114 & 98 & 50971 & 0.20 & 9 & 676 & 676 & 0.08 & 986 & 827 & 1409 & 18437 & 0.08 \\\hline
        \cellcolor[HTML]{D9E1F2}\{0.5, 0.9\} & 770 & {2} & 898 & 1003 & {9.04} & 3163 & 245 & 377 & 579250 & 8.76 & 0 & 875 & 1068 & 17.0 & 3600 & 980 & 6279 & 435079 & 12.0 \\\hline
        \cellcolor[HTML]{D9E1F2}\{0.2, 0.5, 0.8\} & 790 & {5} & 945 & 1010 & {5.25} & 2584 & 145 & 235 & 571582 & 4.71 & 0 & 918 & 1059 & 12.4 & 3600 & 981 & 5947 & 434403 & 7.77 \\\hline
        \cellcolor[HTML]{D9E1F2}\{0.5, 0.7, 0.9\} & 773 & {5} & 929 & 1015 & {6.92} & 2714 & 218 & 342 & 579739 & 6.75 & 0 & 915 & 1083 & 14.5 & 3600 & 948 & 5051 & 489491 & 8.86 \\ \hline 
    \end{tabular}}
    \caption{Comparison between model~$(\mathsf{\textcolor{OliveGreen}{BPMD}})$ and $(\mathsf{\textcolor{OliveGreen}{BPMD\text{-}z}})$.}
    \label{tab:1}
\end{table}

\begin{table}[ht!]
    \centering
    \scalebox{0.6}{\begin{tabular}{|l|r|r r r r r r r r | r |r r r r r r r r | r |} \hline
        \multirow{2}{*}{} &  {Heuristic} &  \multicolumn{9}{c|}{Model~$(\mathsf{\textcolor{OliveGreen}{BPMD_d}})$} & \multicolumn{9}{c|}{Model~$(\mathsf{\textcolor{OliveGreen}{BPMD_d\text{-}z}})$} \\ \cline{2-20}
        & $\text{LB}_h$
        & \#opt & LB & UB & gap & time & septime & \#sep & \#nodes & $\overline{\text{gap}}$ &
        \#opt & LB & UB & gap & time & septime & \#sep & \#nodes & $\overline{\text{gap}}$   \\ \hline
        \textit{Chao instances} & \multicolumn{19}{c|}{} \\ \hline
        \cellcolor[HTML]{D9E1F2}\{0.2, 0.5\} & 1076  & 15 & 1076 & 1076 & 0.00 & 7.2 & 0.00 & 1 & 0 & 0.00 & 15 & 1076 & 1076 & 0.00 & 7.8 & 1.5 & 2 & 0 & 0.00  \\ \hline
        \cellcolor[HTML]{D9E1F2}\{0.5, 0.9\} & 1876  & 0 & 1889 & 1937 & 2.69 & 3600 & 1822 & 420 & 292460 & 2.69 & 0 & 1900 & 1937 & 1.91 & 3600 & 2655 & 5148 & 113127 & 1.91 \\ \hline
        \cellcolor[HTML]{D9E1F2}\{0.2, 0.5, 0.8\} & 1716 & 9 & 1718 & 1722 & 0.27 & 1446 & 893 & 176 & 87529 & 0.27 & 10 & 1718 & 1722 & {0.23} & 1320 & 1161 & 1544 & 25102 & {0.23} \\ \hline
        \cellcolor[HTML]{D9E1F2}\{0.5, 0.7, 0.9\} & 1881 & 0 & 1891 & 1937 & 2.50 & 3600 & 1606 & 336 & 243992 & 2.50 & 0 & 1908 & 1937 & 1.52 & 3600 & 2598 & 5286 & 146093 & 1.52 \\
        \hline\hline
        \textit{Solomon instances} & \multicolumn{19}{c|}{} \\ \hline
        \cellcolor[HTML]{D9E1F2}\{0.2, 0.5\} & 672  & 9 & 675 & 676 & 0.20 & 904 & 132 & 94 & 81940 & 0.20 & 9 & 675 & 676 & {0.10}  & 944 & 807 & 1733 & 15618 & 0.10 \\\hline
        \cellcolor[HTML]{D9E1F2}\{0.5, 0.9\} & 770 & 1 & 867 & 1015 & {13.3} & 3495 & 347 & 417 & 809036 & 12.2 & 0 & 842 & 1078 & 22.0 & 3600 & 1297 & 9379 & 305963 & 16.1 \\\hline
        \cellcolor[HTML]{D9E1F2}\{0.2, 0.5, 0.8\} & 790 & 1 & 929 & 1021 & {8.24} & 3405 & 300 & 418 & 655635 & 6.79 & 0 & 873 & 1072 & 18.5 & 3600 & 1211 & 8422 & 415383 & 12.8 \\\hline
        \cellcolor[HTML]{D9E1F2}\{0.5, 0.7, 0.9\} & 773 & 0 & 907 & 1031 & {11.0} & 3600 & 517 & 435 & 628041 & 9.5 & 0 & 884 & 1094 & 19.2 & 3600 & 1305 & 8771 & 366167 & 12.4 \\ \hline 
    \end{tabular}}
    \caption{Comparison between model~$(\mathsf{\textcolor{OliveGreen}{BPMD_d}})$ and $(\mathsf{\textcolor{OliveGreen}{BPMD_d\text{-}z}})$.}
    \label{tab:2}
\end{table}

\begin{table}[ht!]
\centering
    \scalebox{0.6}{\begin{tabular}{|l|r|r r r r r r r r | r |r r r r r r r r | r |} \hline
        \multirow{2}{*}{} &  {Heuristic} &  \multicolumn{9}{c|}{Model~$(\mathsf{\textcolor{OliveGreen}{BPMD_d}})$} & \multicolumn{9}{c|}{Model~$(\mathsf{\textcolor{OliveGreen}{BPMD_d\text{-}z}})$} \\ \cline{2-20}
        & $\text{LB}_h$
        & \#opt & LB & UB & gap & time & septime & \#sep & \#nodes & $\overline{\text{gap}}$ &
        \#opt & LB & UB & gap & time & septime & \#sep & \#nodes & $\overline{\text{gap}}$   \\ \hline
        \cellcolor[HTML]{D9E1F2}\{0.2, 0.5\} & 529 & {7} & 529 & 598 & {8.76} & 2402 & 305 & 34 & 615771 & 7.29 & 2 & 529 & 756 & 26.2 & 3141 & 2501 & 8893 & 61311 & 7.29 \\ \hline
        \cellcolor[HTML]{D9E1F2}\{0.5, 0.9\} & 941 & 6 & 954 & 1077 & {8.81} & 2465 & 141 & 44 & 530289 & 7.46 & 2 & 951 & 1367 & 27.0 & 3281 & 2181 & 11178 & 93651 & 7.89 \\ \hline 
        \cellcolor[HTML]{D9E1F2}\{0.2, 0.5, 0.8\} & 841 & 6 & 848 & 958 & {8.80} & 2480 & 143 & 43 & 477236 & 7.47 & 2 & 841 & 1222 & 28.2 &  3179 & 2062 & 9675 & 82686 & 8.14\\ \hline
        \cellcolor[HTML]{D9E1F2}\{0.5, 0.7, 0.9\} & 942 & 5 & 949 & 1089 & 9.85 & 2734 & 331 & 47 & 465270 & 7.96 & 3 & 948 & 1363 & 26.4 & 3216 & 2037 & 10116 & 70487 & 8.01 \\ \hline
    \end{tabular}}
    \caption{Comparison between model~$(\mathsf{\textcolor{OliveGreen}{BPMD}})$ and $(\mathsf{\textcolor{OliveGreen}{BPMD\text{-}z}})$ when considering the route duration limit.}
    \label{tab:3}
\end{table}
\begin{table}[ht!]
    \centering
    \scalebox{0.6}{\begin{tabular}{|l|r|r r r r r r r r | r |r r r r r r r r | r |} \hline
        \multirow{2}{*}{} &  {Heuristic} &  \multicolumn{9}{c|}{Model~$(\mathsf{\textcolor{OliveGreen}{BPMD_d}})$} & \multicolumn{9}{c|}{Model~$(\mathsf{\textcolor{OliveGreen}{BPMD_d\text{-}z}})$} \\ \cline{2-20}
        & $\text{LB}_h$
        & \#opt & LB & UB & gap & time & septime & \#sep & \#nodes & $\overline{\text{gap}}$ &
        \#opt & LB & UB & gap  & time & septime & \#sep & \#nodes & $\overline{\text{gap}}$  \\ \hline
        \cellcolor[HTML]{D9E1F2}\{0.2, 0.5\} & 529 & {7} & 529 & 612 & {10.3} & 2564 & 395 & 71 & 608464 & 7.26 & 0 & 529 & 805 & 34.0 & 3600 & 2206 & 12492 & 68683 & 7.29 \\ \hline 
        \cellcolor[HTML]{D9E1F2}\{0.5, 0.9\} & 941 & 6 & 949 & 1099 & 10.9 & 2760 & 406 & 74 & 722421 & 8.07 & 0 & 946 & 1449 & 34.5  & 3600 & 2279 & 12192 & 75795 & 8.71\\ \hline 
        \cellcolor[HTML]{D9E1F2}\{0.2, 0.5, 0.8\} & 839 & 5 & 849 & 991 & {11.8} & 3039 & 472 & 76 & 778847 & 7.57 & 0 & 844 & 1313 & 36.5 & 3600 & 2433 & 11663 & 62590 & 7.97 \\ \hline
        \cellcolor[HTML]{D9E1F2}\{0.5, 0.7, 0.9\} & 942 & 3 & 950 & 1129 & {14.4} & 3063 & 406 & 69 & 787943 & 8.10 & 0 & 946 & 1479 & 36.9 & 3600 & 2415 & 12466 & 79820 & 8.69 \\ \hline
    \end{tabular}}
    \caption{Comparison between model~$(\mathsf{\textcolor{OliveGreen}{BPMD_d}})$ and $(\mathsf{\textcolor{OliveGreen}{BPMD_d\text{-}z}})$ when considering the route duration limit.}
    \label{tab:4}
\end{table}

These tables clearly show that, if the high margin is \textit{not so high}, i.e., $\{0.2, 0.5\}$, then the leader can assign all (or almost all) the margins to high, i.e., $0.5$, and all the followers accept their assignment, thus the branching tree is very small, as the heuristic finds an optimal solution in most of the cases. Otherwise, more iterations will be performed, and more computing time is required. 
In Tables~\ref{tab:1}--\ref{tab:2}, we can observe two different trends for the \textit{Chao} and the \textit{Solomon instances}. For the \textit{Chao instances}, the gap at termination returned by the models without ${z}$ is lower. The opposite is true for the \textit{Solomon instances}. This is related to the fact that in the \textit{Solomon instances} the customers are more randomly distributed and further apart from each other, thus having information on the ${z}$ variables at the master level helps in the resolution. This information on the routing decisions is also useful for solving the \textit{Chao instances} when the limit on the route duration is imposed, as it is clear from Tables~\ref{tab:3}--\ref{tab:4}. 
Indeed, in models having ${z}$ variables in the upper level, constraint~\eqref{eq:pricing-tmax} can be directly imposed at the upper level and does not impact the separation procedure, as it happens instead in the case of constraint~\eqref{eq:proj-tmax}.

The values of $\overline{\text{gap}}$ are calculated ex-post, after taking into consideration both the upper bound coming from the branch-and-cut and the one coming from the solution of the $(\mathsf{\textcolor{OliveGreen}{WTA\text{-}PTP\text{-}MD}})$ formulation. Whereas the value of the gap can be used to compare the performances of the LP relaxations of the different formulations considered, the value of $\overline{\text{gap}}$, which is on average the smallest between the two, provides us with information regarding the quality of the best solution found by the different methods. The values of $\overline{\text{gap}}$ follow the same trends of the values of the classic gap for the \textit{Chao} and the \textit{Solomon instances}.

We can further notice that, in general, the number of branch-and-cut nodes required by the models without variables ${z}$ is smaller than the one required by the models with ${z}$, even if the percentage of computing time that is required for the separation procedure is higher. Indeed, for these models, at each node, we need to separate not only the value function cuts and the subtour elimination constraints~\eqref{eq:subtour}, but also constraints~\eqref{eq:routing-proj-theta}, which involve the solution of a TSP.

\begin{figure}[ht!]
    \centering
    \includegraphics[width=13cm,height=6.5cm]{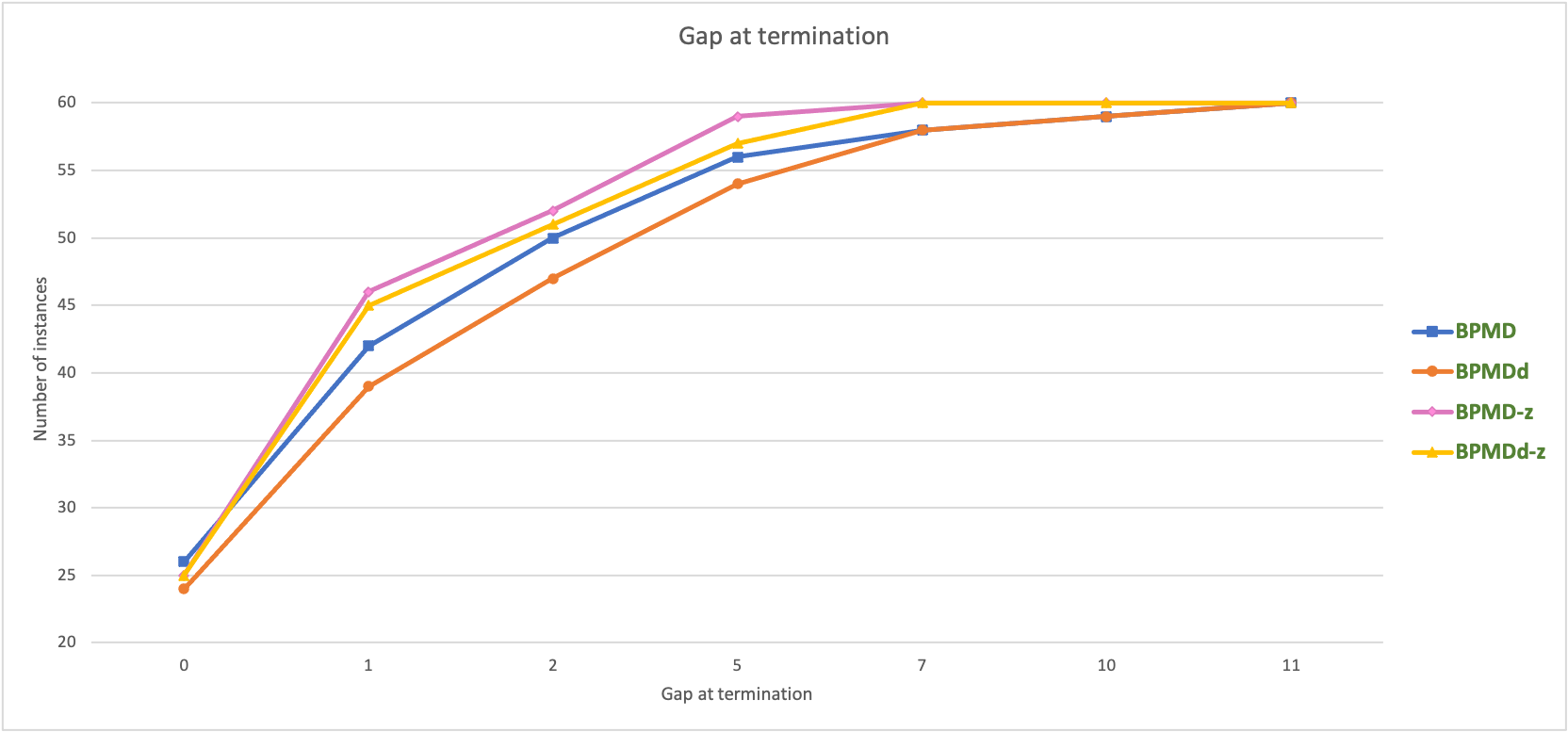}
    \caption{Cumulative chart of the number of \textit{Chao instances} solved within a given gap at termination.}
    \label{fig:chart1}
\end{figure}

\begin{figure}[ht!]
    \centering
    \includegraphics[width=13cm,height=6.5cm]{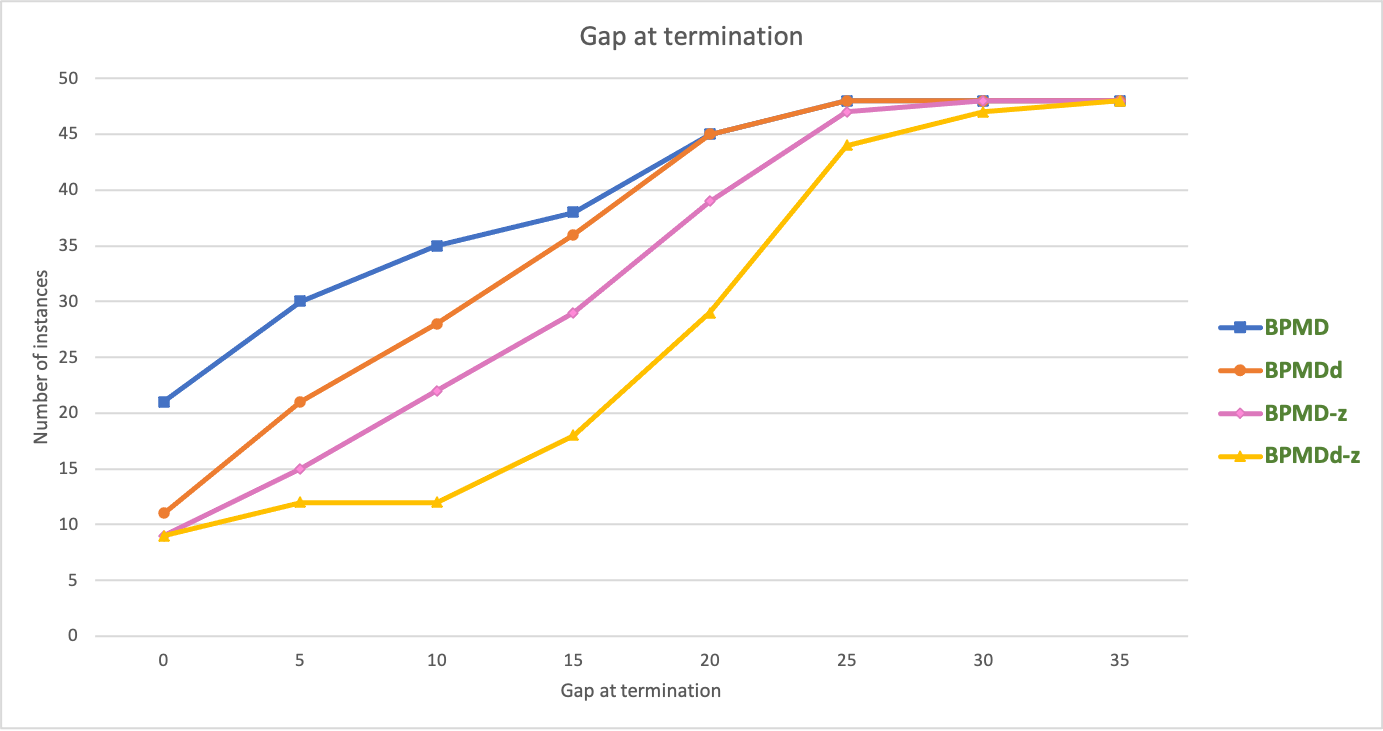}
    \caption{Cumulative chart of the number of \textit{Solomon instances} solved within a given gap at termination.}
    \label{fig:chart2}
\end{figure}

We further provide two summary charts in Figures~\ref{fig:chart1} and \ref{fig:chart2} related to the performance of the four models on the \textit{Chao instances} and \textit{Solomon instances}, respectively. They report the number of instances (on the vertical axis) for which the gap at termination is smaller than or equal to the value reported on the horizontal axis.
They confirm what is shown in the summary tables presented above. Indeed, in Figure~\ref{fig:chart1}, the curves associated with the models without ${z}$ are higher than the curves associated with the models with ${z}$; the opposite is true in Figure~\ref{fig:chart2}. In addition, we can notice that, disaggregated models~$(\mathsf{\textcolor{OliveGreen}{BPMD_d}})$ and $(\mathsf{\textcolor{OliveGreen}{BPMD_d\text{-}z}})$ are overall performing worse than models~$(\mathsf{\textcolor{OliveGreen}{BPMD}})$ and $(\mathsf{\textcolor{OliveGreen}{BPMD\text{-}z}})$, respectively. This might be related to the fact that the disaggregated model has more variables in the lower level. Indeed, on the one hand, these variables are also part of the single-level reformulation (due to the value-function approach), and on the other hand, they slow down the separation procedure, which involves the solution of the lower level.

The same conclusions can be drawn from the observations of the box plots reported in Figures~\ref{fig:box1a}-\ref{fig:box1b} and \ref{fig:box2a}-\ref{fig:box2b} for \textit{Chao instances} and \textit{Solomon instances}, respectively. These figures show the distribution of the gap at termination, grouped by the number of customers (Figures~\ref{fig:box1a} and \ref{fig:box2a}) or vehicles (Figures~\ref{fig:box1b} and \ref{fig:box2b}).

\begin{figure}[ht!]
    \centering
    \subfloat[Box plots obtained by aggregating the \textit{Chao instances} with the same number of customers.]{\includegraphics[width=13cm,height=6cm]{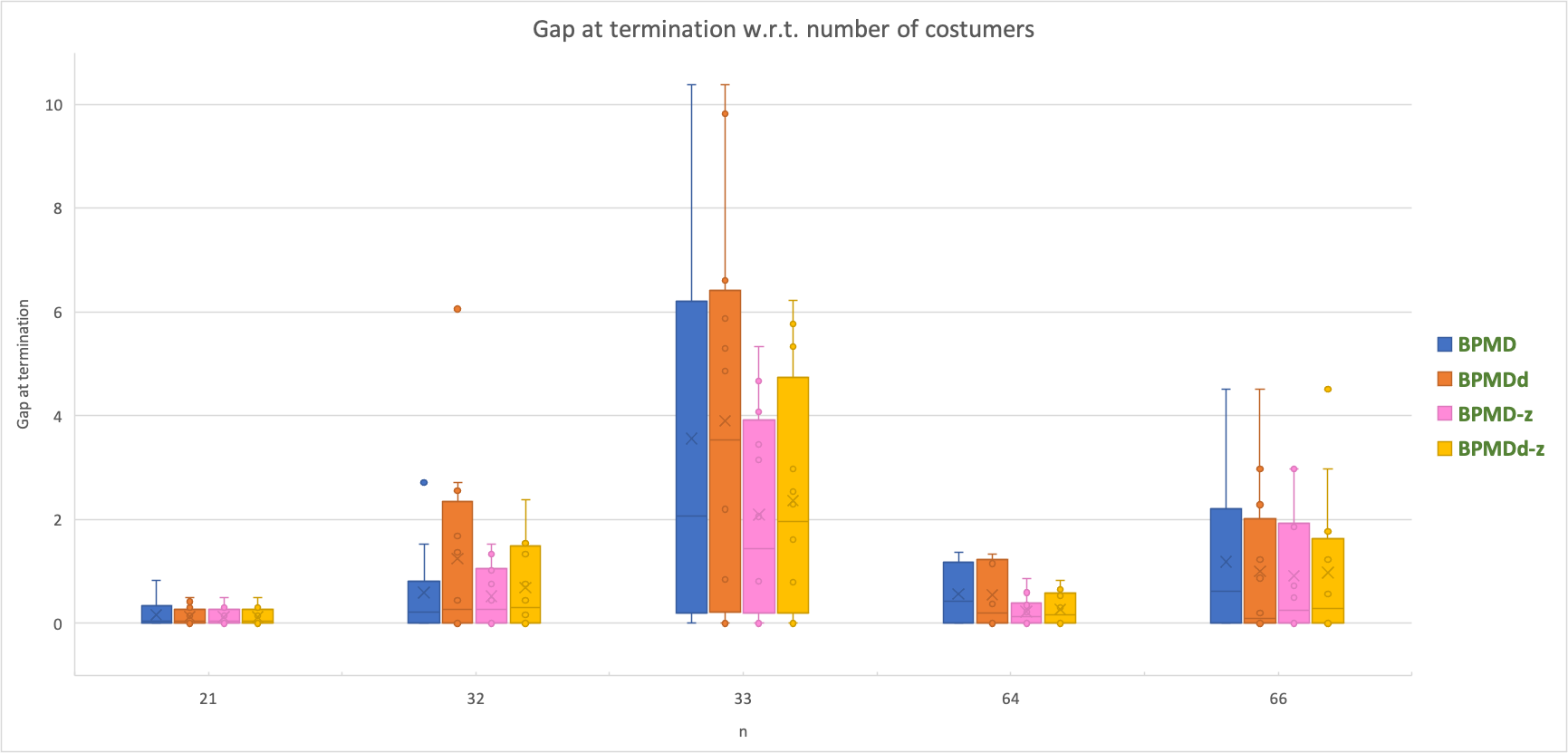}
        \label{fig:box1a}}\hfill
    \centering
    \subfloat[Box plots obtained by aggregating the \textit{Chao instances} with the same number of vehicles.]{\includegraphics[width=13cm,height=6cm]{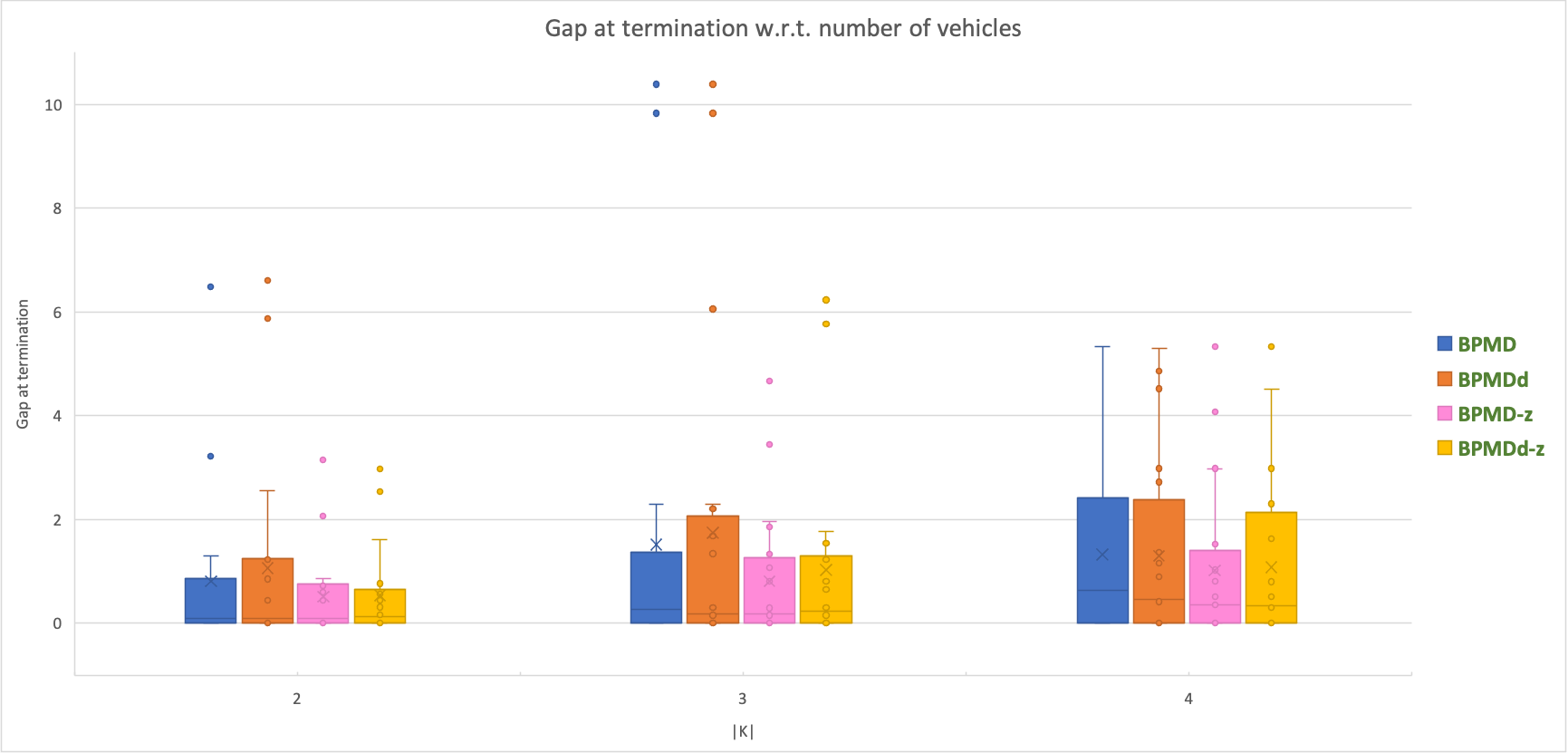}
        \label{fig:box1b}}
    \caption{Box plots representing the distribution of the gap at termination of \textit{Chao instances}.\vspace*{-3mm}}
\end{figure}
\begin{figure}[ht!]
    \centering
    \subfloat[Box plots obtained by aggregating the \textit{Solomon instances} with the same number of customers.]{\includegraphics[width=13cm,height=6cm]{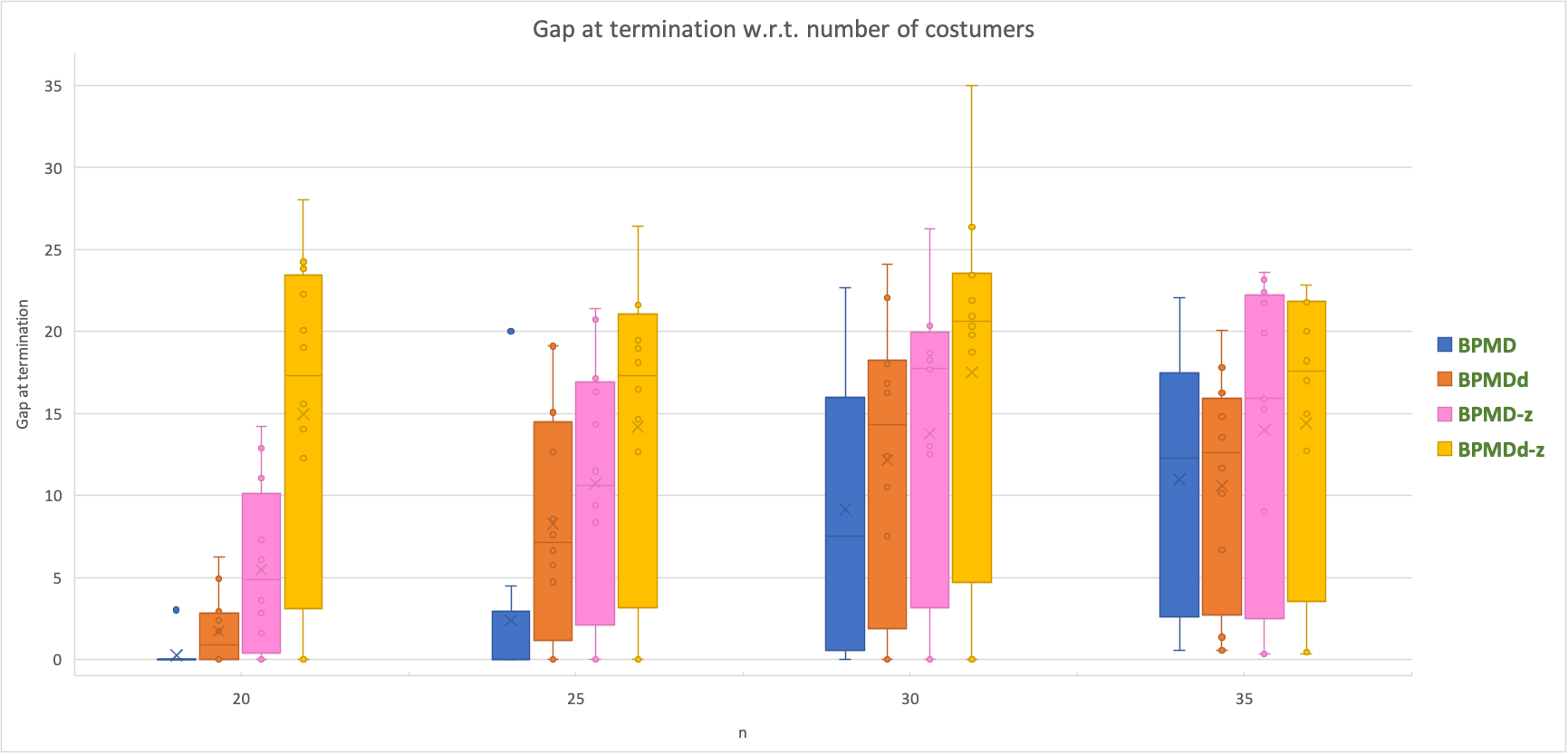}
        \label{fig:box2a}}\hfill
    \centering
    \subfloat[Box plots obtained by aggregating the \textit{Solomon instances} with the same number of vehicles.]{\includegraphics[width=13cm,height=6cm]{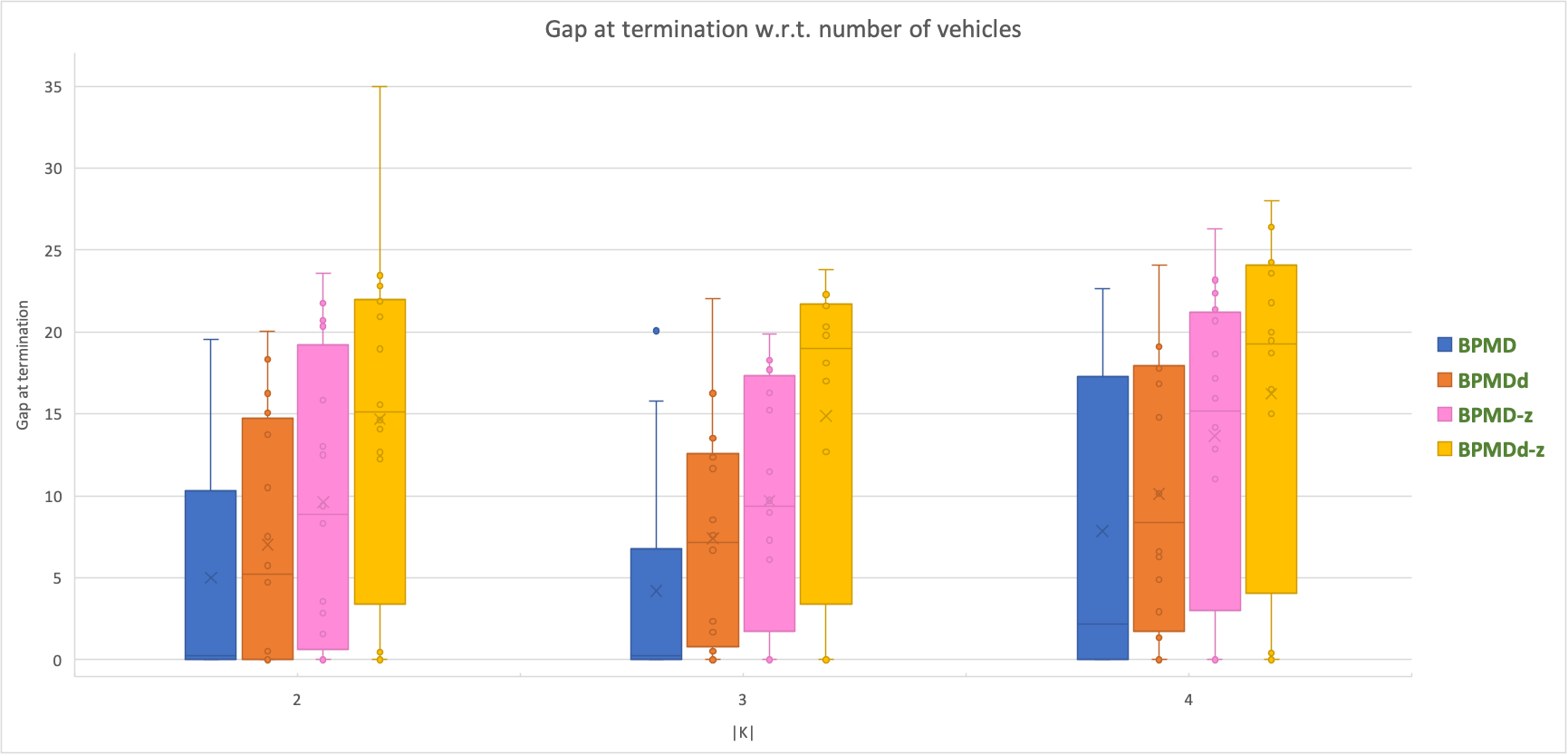}
        \label{fig:box2b}}
    \caption{Box plots representing the distribution of the gap at termination of \textit{Solomon instances}.\vspace*{-3mm}}
\end{figure}

\subsection{\texorpdfstring{The gain related to margin decisions}{The gain related to margin decisions}}\label{sub:gain}
To understand the structure of the solutions and the gain related to margin decisions, we choose two illustrative instances, namely the \textit{Solomon instances} R20\_2 and R20\_3, both of which could be solved to optimality by model~\eqref{eq:pricing1_single} for all the tested margin values, even without the warm-start solution provided by the heuristic.

Table~\ref{tab:5} has two blocks of columns: one is devoted to the results on the two considered instances when margin decisions are considered, and the other to the results when the margins are fixed to different values of $m \in M$. For the BPMD, for every tested set of margins $M$, we report the profit of the platform, the percentage of items served with high, medium, and low margins respectively, the percentage of served customers, and the computational time. For the BPFM, in every row we report the profit of the platform and the computational time when considering all margins fixed to either low, or high or, if $|M|=3$, medium, as well as when considering different random margins chosen within $M$ for every item.

From the first block of the table, it is evident that the higher the considered margins are, the more diverse the solutions are, in terms of the margins applied in optimal solutions, and the more difficult it is to solve them. Furthermore, for the leader, it seems more convenient to consider $i)$ higher margins, and $ii)$ a higher number of margins. When comparing the first and the second blocks of columns of the tables, we can clearly notice the added value in terms of the platform's profit from making margin decisions as compared to operating under fixed margins.

\begin{table}[ht!]
\centering
\scalebox{0.6}{\begin{tabular}{|l|c|rrr|r|r|rr|rr|rr|rr|}
    \hline
    & \multicolumn{6}{c}{\multirow{2}{*}{BPMD}} & \multicolumn{8}{|c|}{BPFM} \\ \cline{8-15}
    & \multicolumn{6}{c}{} & \multicolumn{2}{|c}{low} & \multicolumn{2}{|c}{medium} & \multicolumn{2}{|c}{high} & \multicolumn{2}{|c|}{random} \\ \hline
    & profit & \%high & \%medium & \%low & \%served & time & profit & time & profit & time & profit & time & profit & time \\ \hline
    R20\_2 & \multicolumn{14}{c|}{} \\ \hline
    \cellcolor[HTML]{D9E1F2}\{0.2, 0.5\} & 487.5 & 100.0 & - & 0.0 & 100.0 & 29.9 & 195.0 & 90.4 & - & - & 487.5 & 12.6 & 315.3 & 28.7 \\ 
    \cellcolor[HTML]{D9E1F2}\{0.5, 0.9\} & 675.7 & 41.2 & - & 58.8 & 89.5 & 805.2 & 487.5 & 12.6 & - & - & 0.0 & 0.23 & 527.9 & 980.7\\ 
    \cellcolor[HTML]{D9E1F2}\{0.2, 0.5, 0.8\} & 691.8 & 68.8 & 31.2 & 0.0 & 84.2 & 399.3 & 195.0 & 90.4 & 487.5 & 12.6 & 0.0 & 0.30 & 514.1 & 1462 \\ 
    \cellcolor[HTML]{D9E1F2}\{0.5, 0.7, 0.9\} & 731.8 & 33.3 & 44.4 & 22.2 & 90.0 & 500 & 487.5 & 12.6 & 692.3 & 59.6 & 0.0 & 54.9 & 612.5 & 436.1 \\ \hline
    R20\_3 & \multicolumn{14}{c|}{} \\ \hline
    \cellcolor[HTML]{D9E1F2}\{0.2, 0.5\} & 487.5 & 100.0 & - & 0.0 & 100.0 & 10.4 & 195.0 & 6.9 & - & - & 487.5 & 36.1 & 351.6 & 30.9 \\ 
    \cellcolor[HTML]{D9E1F2}\{0.5, 0.9\} & 661.3 & 41.2 & - & 58.8 & 89.5 & 1495 & 487.5 & 36.1 & - & - & 0.0 & 0.28 & 384 & 195.6\\ 
    \cellcolor[HTML]{D9E1F2}\{0.2, 0.5, 0.8\} & 675.0 & 62.5 & 37.5 & 0.0 & 84.2 & 1649 & 195.0 & 6.9 & 487.5 & 36.1 & 0.0 & 0.28 & 434.0 & 552.6 \\ 
    \cellcolor[HTML]{D9E1F2}\{0.5, 0.7, 0.9\} & 705.6 & 22.2 & 50.0 & 27.8 & 90.0 & 42.4 & 487.5 & 36.1 & 674.8 & 98.7 & 0.0 & 122.5 & 612.2 & 13.5 \\ \hline
\end{tabular}}
\caption{Structure of optimal solutions of models $(\mathsf{\textcolor{OliveGreen}{BPMD}})$ and $(\mathsf{\textcolor{OliveGreen}{BPFM}})$ on \textit{Solomon instances} R20\_2 and R20\_3.}
\label{tab:5}
\end{table}

\section{Conclusions}\label{sec:conc}

The last-mile delivery field is undergoing an unprecedented transformation in its operational procedures, primarily driven by the surge in e-commerce. This shift has had significant consequences on the way business is conducted. First, the volume of deliveries has increased substantially: as customers opt for online ordering over in-store purchases, their orders must be efficiently dispatched for delivery. Second, e-buyers are more and more demanding in terms of delivery times. Consequently, the demand for delivery services has become unpredictable and volatile, whereas opportunities for consolidation are reduced. To address these challenges, the companies in the field started developing new delivery strategies. One such strategy, which is gaining significant success, is related to peer-to-peer delivery. In this model, companies (referred to as platforms in this paper) receive delivery requests from customers and match them with independent carriers available to perform the deliveries. Unlike the traditional setting, carriers in peer-to-peer delivery do not work directly for the company, but they have their own objective, which might not always align with those of the company. Therefore, the challenge for the company lies in maximizing the profit from the delivery operations, taking into account carriers' objectives and behavior. 

In this paper, we study a bilevel compensation and routing problem arising in the context of peer-to-peer delivery. The problem combines the peer-to-peer logistic platform decisions about the assignment of items to the carriers and the compensation for each delivered item. The objective of the platform is to maximize the profit generated from the delivered items, all the while factoring in carriers' individual objectives when making assignment and compensation decisions. The bilevel nature of these problems is highlighted by presenting two single-level formulations that lead to either an overestimation or an underestimation of the platform's profit. After considering the fixed compensation setting, we propose two bilevel formulations for the compensation and routing problem, one with aggregated variables and the other with disaggregated variables. These formulations are then reformulated into single-level models, which are compared in terms of the quality of their linear relaxations. Additionally, we present equivalent formulations where routing variables are projected out. Computational tests show that the performance of the formulations depends on the features of the instances, i.e., compensation values and customers' geography. Whereas on average the disaggregated models are performing worse than the aggregated ones, projecting out the routing variables only helps for one type of instance. The numerical results confirm that solving the single-level formulations (with reduced or increased power of the carriers, respectively) leads to biases in the true platform solution values. Furthermore, the analysis of the structure of the solutions reveals that $i)$ including decisions on the margins results in better profits for the platform, $ii)$ the platform may benefit from offering higher compensations to carriers, resulting in a higher number of accepted offers.

Besides some natural extensions of the problem, such as considering multiple depots for the carriers, multiple vehicles in each subproblem, or introducing a penalty for the undelivered items, future research may explore the introduction of stochasticity into the problem setting, especially with regard to carriers' behavior. The challenge would be modeling the uncertainty, on one side, and adapting the methodologies proposed in this paper to deal with it, on the other side, or potentially, proposing ad-hoc modeling and approaches.
\medskip

\noindent\textbf{Acknowledgements:} The research of E.\ Fern\'andez has been partially supported through the Spanish Ministerio de Ciencia y Tecnolog\'ia and European Regional Development Funds (ERDF) through project MTM2019-105824GB-I00. 
The research of C.\ Archetti, M.\ Cerulli, and I.\ Ljubi\'c was partially funded by CY Initiative of Excellence, France (grant ``Investissements d’Avenir ANR-16-IDEX-0008''). This support is gratefully acknowledged.
\bibliography{Cerulli-Archetti-Fernandez-Ljubic2024}

\end{document}